\documentclass[a4]{article}
\usepackage{amsmath}
\usepackage{amsthm}
\usepackage{eufrak}

\begin{document}
\title{\bf Local probabilities for random permutations without long cycles}

\author { Eugenijus Manstavi\v cius and Robertas Petuchovas}
\maketitle
\footnotetext{{\it AMS} 2000 {\it subject classification.} Primary 05A15, secondary: 60C05, 11B75, 60F10. \break {\it Key words and
phrases}. Random permutation, cycle structure, involution, saddle point method, Dickman function}

\maketitle

\begin{abstract} We explore the probability $\nu(n,r)$ that
a permutation sampled from the symmetric group of order $n$
 uniformly at random has cycles of lengths not exceeding $r$, where  $1\leq r\leq n$ and $n\to\infty$.
Asymptotic formulas valid in specified   regions for the ratio $n/r$ are obtained using the saddle point method
combined with ideas originated in analytic number theory. Theorem 1 and its detailed proof are included to rectify formulas for small $r$ which have
been announced by a few other authors.

\end{abstract}

\newtheorem{theorem}{Theorem}
\newtheorem{lemma}{Lemma}
\newtheorem{corollary}{Corollary}
\newtheorem*{cor*}{Corollary}
\newtheorem{conj}{Conjecture}

\newtheorem{prop}{Proposition}
\newtheorem*{CLT}{CLT (\cite{EM-LMJ96})}
\newtheorem*{Comp}{Compactness Thm (\cite{EM-RJ08})}

\newtheorem*{WLLN}{WLLN (\cite{EM-RJ08})}

\def\E{\mathbf{E}}
\def\C{\mathbf{C}}
\def\D{\mathbf{D}}
\def\N{\mathbf{N}}
\def\R{\mathbf{R}}
\def\S{\mathbf{S}_n}
\def\Z{\mathbf{Z}}
\def\k{\kappa}
\def\e{\varepsilon}
\def\n{$n\to\infty$}
\def\cF{\mathcal F}

\def\re{{\rm e}}
\def\rO{{\rm O}}
\def\ro{{\rm o}}
\def\rd{{\rm d}}

\def\s{\smallskip}
\def\b{\bigskip}

\section{Introduction}

   The history on enumeration of  decomposable structures missing large components starts from the papers by K. Dickman and
   N.G. de Bruijn dealing with natural numbers composed of small prime factors. After numerous continuations, this analytic
   theory is now extensively developed and well exposed in the book by G. Tenenbaum \cite{GT} and in more recent papers.   By analogy, a similar theory was carried out for polynomials over a finite field (see, for example,  \cite{Odlyzko},  \cite{GarePana}) and   generalized to the so-called additive arithmetical semigroups (see \cite{Warlimont}, \cite{EM-Pal92}, \cite{EM-LMJ92}).  The survey \cite{Granv} discusses the parallelism between the theories. In no way, the list does not pretend to be complete, however,   it has influenced the present paper devoted to permutations. So far, the results on this particular class of structures do not reach the level of   research achieved for natural numbers. We focus only on permutations comprising the symmetric group $\S$ and seek asymptotic formulas for the probability $\nu(n,r)$
     that a permutation sampled uniformly at random has cycles of lengths not exceeding $r$, where  $1\leq r\leq n$, $r\in \N$, and $n\to\infty$.
   The goal is to cover the whole  range for the parameter $r$.

    Let us start from an exact formula.
      Denote $\N_0=\N\cup\{0\}$, $\ell_r(\bar s)=1s_1+\cdots+rs_r$, $\ell(\bar s)=\ell_n(\bar s)$, where $\bar s=(s_1,\dots, s_n)\in\N_0^n$.
   If $k_j(\sigma)$ equals the number of cycles in a permutation $\sigma\in\S$ of length $1\leq j\leq n$ and $\bar k(\sigma):=
   \big(k_1(\sigma),\dots,k_n(\sigma)\big)$ is the cyclic  structure vector, then (see, for example, \cite{ABT})
   \[
                \big|\{\sigma\in\S:\; \bar k(\sigma)=\bar s\}\big|={\mathbf 1}\{\ell(\bar s)=n\} n!\prod_{j=1}^n\frac{1}{j^{s_j} s_j!}.
   \]
   Hence
   \[
                \nu(n,r)=\frac{1}{n!}\big|\{\sigma\in\S:\; k_j(\sigma)=0\, \forall j\in\overline{r+1, n}\}\big|=
                \sum_{\ell_r(\bar s)=n}\prod_{j=1}^r\frac{1}{j^{s_j} s_j!},
   \]
   where the summation is over the vectors $\bar s\in\N_0^r$ with $\ell_r(\bar s)=n$.
   The formula can be rewritten in terms of independent Poisson random variables (r.vs) $Z_j$, $1\leq j\leq n$,  given on some
   probability space $(\Omega,{\mathcal F}, P)$ and such that $\E Z_j=1/j$. Namely,
   \begin{equation}
      \nu(n,r)= \exp\bigg\{\sum_{j^=1}^r\frac{1}{j}\bigg\} P\big(\ell_r(\bar Z)=n\big),
      \label{P-ell}
      \end{equation}
   where $\bar Z:=(Z_1,\dots, Z_n)$. In  two trivial cases, we have
   $ \nu(n,1)=1/n!$ and $\nu(n,n)=1$. It is fairly tedious to extract information from the exact formula if $r$ is large.
    Let us discuss asymptotical behaviour as $n\to\infty$.

      A historical overview may be started from the number of involutions in $\S$.  Namely,  in 1955  L. Moser and M. Wyman \cite{MoWy55} proved that
     \begin{equation}
     n!\nu(n,2)= \frac{1}{\sqrt2} n^{n/2} \exp\Big\{- \frac{n}{2}+n^{1/2}- \frac{1}{4}\Big\}\big(1+o(1)\big).\label{nun2}
     \end{equation}

    H.~Wilf included a detailed proof of (\ref{nun2}) into Chapter 5  of his book \cite{Wilf}. However, Exercise 8 in it
     gives an erroneous expression for $r=3$. It shoud be
         \begin{equation}
     n!\nu(n,3)=\frac{n^{2n/3}}{\sqrt3} \exp\Big\{-\frac{2n}{3} +\frac{1}{2} n^{2/3} +\frac{5}{6} n^{1/3}-\frac{5}{18}\Big\} \big(1+o(1)\big).\label{nun3}
     \end{equation}
    As we have been able to check, the last formula without a detailed proof firstly appears in A.N. Timashov's paper \cite{Timash}. Note that his reference to V.N. Sachkov's paper \cite{Sachkov}, in which formulas  (23) and (24) really concern $n!\nu(n,r)$ for an arbitrary $r$, is misleading. These formulas  have been also presented without a proof; containing  a misprint, they  go in contrast to (\ref{nun3})  and even to (\ref{nun2}). We have  to note that,  a  year later, M. Lugo \cite{Lugo}  also gave (\ref{nun3}) leaving for a reader  other cases of $n!\nu(n,r)$. Recently E. Schmutz kindly gave a reference to the manuscript by T. Amdeberhan and V.H. Moll \cite{AmdMoll} dealing with the same problem. Their Theorem 8.1 also contains errors.  We feel obliged to present a correct formula in Theorem 1.

    Let $\Gamma(z)$ be the Euler gamma-function, where $z\in\C$. Avoiding numerous brackets, instead of $O(\cdot)$, we will use a complex quantity $B$, not the same at different places but always bounded by an absolute constant. Otherwise, stressing dependence on a parameter $v$ in an estimate, we will write $O_v(\cdot)$  with the extra index.

        \begin{theorem}\label{Theorem 1}  If $ 2\leq r\leq \log n$, then
\[
   n!\nu(n,r)=
   \frac{1}{\sqrt r} n^{n(1-1/r)}\exp\bigg\{\sum_{N=0}^r   d_{rN} n^{(r-N)/r}\bigg\} \big(1+Bn^{-1/r}\big).
   \]
 Here $d_{r0}=-1+1/r$,
\[
   d_{r,r}=-\frac{1}{r}\sum_{j=2}^r\frac{1}{j}
   \]
and
\[
             d_{rN}= \frac{\Gamma(N+N/r)}{(r-N)\Gamma(N+1) \Gamma(1+N/r)}
\]
if $1\leq N\leq r-1$.\\
\end{theorem}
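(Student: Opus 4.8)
\emph{Proof outline.} The plan is to apply the saddle point method to the Cauchy integral
\[
   \nu(n,r)=[x^n]\exp\{A(x)\}=\frac{1}{2\pi i}\oint_{|x|=\rho}\frac{\exp\{A(x)\}}{x^{n+1}}\,\rd x,\qquad A(x):=\sum_{j=1}^r\frac{x^j}{j},
\]
the first equality being the generating-function form of the exact formula in the introduction (equivalently \eqref{P-ell}). I would take $\rho=\rho(n,r)>1$ to be the saddle, i.e. the unique positive root of $\Lambda(\rho)=n$, where $\Lambda(x):=x A'(x)=\sum_{j=1}^r x^j=(x^{r+1}-x)/(x-1)$. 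The opening step is to locate $\rho$: for $2\le r\le\log n$ one shows $\rho=n^{1/r}\bigl(1+Bn^{-1/r}\bigr)$, so that $\rho$ stays bounded away from $1$ — this is exactly where the hypothesis $r\le\log n$ enters, since then $n^{1/r}\ge\re$ — and $\beta:=\rho\Lambda'(\rho)=\sum_{j=1}^r j\rho^j=rn\bigl(1+Bn^{-1/r}\bigr)$.

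Next I would carry out the local analysis on the circle $x=\rho\re^{i\theta}$, $|\theta|\le\pi$. On the major arc $|\theta|\le\theta_0:=\beta^{-1/2}\log n$, a Taylor expansion of the exponent around $\theta=0$ gives $A(\rho\re^{i\theta})-n\log\rho-in\theta=A(\rho)-n\log\rho-\tfrac12\beta\theta^2+\text{(remainder)}$, where the cubic remainder is $O\bigl(\theta_0^{3}\sum_{j\le r}j^{3}\rho^{j}\bigr)=O\bigl(r^{3/2}n^{-1/2}(\log n)^{3}\bigr)=\ro(1)$ because $r\le\log n$; completing the Gaussian integral to $\R$ and carrying a few further (even) terms of the expansion — their combined contribution being $O(r/n)=O(n^{-1/r})$ — shows that the major arc contributes $\exp\{A(\rho)\}\rho^{-n}(2\pi\beta)^{-1/2}\bigl(1+Bn^{-1/r}\bigr)$ to $\nu(n,r)$. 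On the minor arc the task is the uniform estimate
\[
   \mathrm{Re}\bigl(A(\rho\re^{i\theta})-A(\rho)\bigr)=\sum_{j=1}^r\frac{\rho^j}{j}\bigl(\cos j\theta-1\bigr)\le -\,c\,\frac{n}{r^{3}}
\]
for an absolute $c>0$; the critical situation is $\theta$ near a multiple of $2\pi/r$, where the high-index band $j=r,r-1,\dots$ of terms is nearly in phase, and one uses that $\rho$ is bounded away from $1$ (so $\sum_{m\ge1}m^{2}\rho^{-m}$ converges) to see that this band still loses $\asymp\rho^{r}/r^{3}\asymp n/r^{3}$. Since $n/r^{3}\to\infty$ while $\log\beta=O(\log n)$, the minor-arc contribution is negligible against the main term. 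I expect this uniform minor-arc bound to be the principal obstacle.

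Putting the two arcs together gives $n!\,\nu(n,r)=n!\,\exp\{A(\rho)\}\rho^{-n}(2\pi\beta)^{-1/2}\bigl(1+Bn^{-1/r}\bigr)$, and inserting Stirling's formula together with $\beta=rn(1+Bn^{-1/r})$ reduces the theorem to the identity
\[
   -\,n\log\rho+A(\rho)+\frac{n}{r}\log n-n=\sum_{N=0}^{r}d_{rN}\,n^{(r-N)/r}+B\,n^{-1/r}.
\]
Using $n=\Lambda(\rho)$ in the form $\frac{n}{r}\log n=n\log\rho+\frac{n}{r}\log\bigl((1-\rho^{-r})/(1-\rho^{-1})\bigr)$, the left-hand side becomes $A(\rho)-n+\frac{n}{r}\log\bigl((1-\rho^{-r})/(1-\rho^{-1})\bigr)$, which — as $\rho^{-r}\le 1/n$ — is an explicit function of $\rho^{-1}$ modulo $O(1/n)$. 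Expanding it in powers of $\rho^{-1}$, and then substituting the expansion of $\rho$ in powers of $n^{-1/r}$ produced by Lagrange--B\"urmann inversion of $\rho^{r}=n(1-\rho^{-1})/(1-\rho^{-r})$ — where raising the inversion kernel to the fractional power $N/r$ generates exactly the factor $\Gamma(N+N/r)/\bigl(\Gamma(N+1)\Gamma(1+N/r)\bigr)$ — one reads off $d_{rN}$ for $1\le N\le r-1$, while the endpoint values $d_{r0}=1/r-1$ and $d_{rr}=-\tfrac1r\sum_{j=2}^{r}1/j$ are obtained directly. The remaining delicate point is to verify that every quantity discarded along the way — the $\rho^{-r}$-tails, the Stirling and saddle-point corrections, and the tail of the Lagrange expansion — is $O(n^{-1/r})$ uniformly throughout $2\le r\le\log n$.
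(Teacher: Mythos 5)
Your proposal follows essentially the same two-stage route as the paper: first a saddle-point evaluation of the Cauchy integral giving $n!\,\nu(n,r)=n!\,Q(\rho)\big(2\pi\lambda(\rho)\big)^{-1/2}\big(1+Bn^{-1/r}\big)$, then a Lagrange--B\"urmann expansion of $\rho$, $Q(\rho)$ and $\lambda(\rho)$ in powers of $n^{-1/r}$ to read off the coefficients $d_{rN}$. In the paper the first stage is not redone inside the proof of Theorem~1 but quoted as Corollary~\ref{2cor} (which rests on Theorem~\ref{thm1}, whose proof is exactly a major/minor-arc argument of the kind you sketch), and Section~5 carries out the second, algebraic stage through Lemmas~\ref{lema1}--\ref{lema3} and Corollaries~\ref{cor1}--\ref{cor2}; your final computation, including the Gamma-quotient identification from the fractional-power inversion kernel and the endpoint values $d_{r0},d_{rr}$, matches that.

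One step as stated is wrong and needs repair. The uniform minor-arc bound $\Re\big(A(\rho\re^{i\theta})-A(\rho)\big)\le -c\,n/r^{3}$ cannot hold on the whole set $\theta_0\le|\theta|\le\pi$ with $\theta_0=\beta^{-1/2}\log n$: at $|\theta|=\theta_0$ the exponent is governed by the Gaussian term and equals $\approx-\tfrac12\beta\theta_0^{2}=-\tfrac12(\log n)^{2}$, which is far smaller in magnitude than $n/r^{3}$ (already for $r=2$ you would be claiming $-cn/8$ where the truth is $-\tfrac12(\log n)^{2}$). The cusp estimate you describe is correct only once $|\theta|$ is of order $1/r$ or larger. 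The paper (proof of Theorem~\ref{thm1}, using Lemma~\ref{7lema}) resolves this by splitting the minor arc once more at $|\theta|=1/r$: on $\theta_0\le|\theta|\le 1/r$ the elementary inequality $1-\cos t\ge 2t^2/\pi^2$ keeps the quadratic decay $-\tfrac{2}{\pi^2}\lambda\theta^2$, which is already super-polynomially small after integrating; and only on $1/r\le|\theta|\le\pi$ does one invoke the power-of-$n$ saving near the cusps of $g_r$. With this extra split your argument closes; the weaker bound $-\tfrac12(\log n)^2$ in the inner range is still ample against the normalisation $\sqrt{\beta}\asymp\sqrt{rn}$.
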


Our main results are the next two theorems. We prefer to present them  as asymptotic formulas  for the local probability  $P\big(\ell_r(\bar Z)=n\big)$ having the Cauchy integral representation
          \begin{equation}
   P\big(\ell_r(\bar Z)=n\big)={1\over 2\pi i}\int_{|z|=\alpha}\exp\bigg\{\sum_{j=1}^r{z^j-1\over j}\bigg\}
   {dz\over z^{n+1}},
\label{C-integr}
\end{equation}
where  $\alpha>0$ is to be chosen. In the saddle point method, we  take $\alpha=x:=x(n/r)$, where the function $x(u):=x_r(u)$
is the unique positive solution to the equation
\begin{equation}\label{saddle}
\sum_{j=1}^rx(u)^j=ur, \quad u\geq 1.
\end{equation}
Evidently, the problem concerns asymptotical behaviour of the $n$th power series coefficient of the function
 $\exp\Big\{\sum_{j\leq r} z^j/j\Big\}$, belonging to the so-called  Hayman's class of admissible functions
  (see \cite{Hyman}). B. Harris and  L. Schoenfeld \cite{Har+Schon68} extended Hyman's  methodology in obtaining further asymptotical terms.
  In particular, it yields
\begin{equation}
P\big(\ell_r(\bar Z)=n\big)=\frac{Q(x)}{\sqrt{2\pi \lambda(x)}}\left(1+O_r\Big(\frac{1}{n}\Big)\right)
\label{1Pnr}
\end{equation}
 for arbitrary bounded $r$. Here $x=x(n/r)$ and
 \[
Q(z):=\frac{1}{z^n}\exp\left\{\sum_{j=1}^r\frac{z^j-1}{j}\right\},\qquad  \lambda(z):=\sum_{j=1}^r jz^j.
\]
Actually, we owe to E. Schmutz whose Theorem 1 and the facts presented below it in  \cite{Schmutz88} clarify  the use of the general
and fairly complicated expansion given in \cite{Har+Schon68}. A.N.~Timashov \cite{Timash} mentions a  Sachkov's result from 1986, extending formula
(\ref{1Pnr})  for $r=o(\log n)$. Unfortunately, we failed to find a relevant paper.

    The above mentioned results deal with the case when the ratio $n/r$ is large. In addition, there exists
     a vast literature dealing with the case when $n/r$ is small.
     In fact,  the  problem is related to the  limit distribution of the longest cycle length (say, $L_n(\sigma)$) and other statistics of $\sigma\in\S$.
      So,  V.L.~Goncharov's result  \cite{Gonch44} from 1944 shows that
    \[
           \nu(n, n/u)= \frac{1}{n!}\big|\{\sigma\in \S:\; L_n(\sigma)\leq n/u\}\big|=\rho(u)+o(1)
    \]
    uniformly in  $u\geq1$. Here  $\rho(u)$ is the Dickman function defined as the continuous solution
     to the  difference-differential equation
     \[
                   u\rho'(u)+\rho(u-1)=0
      \]
      with the initial condition $\rho(u)=1$ for $0\leq u\leq 1$.
     Since $\rho(u)\leq \Gamma(u+1)^{-1}$,
      the error in the last estimate can  dominate if $u\to\infty$. Theorem 4.13 in \cite{ABT}, applied for permutations, deals with the  relative error.
       Namely, it shows that
     \[
           \nu(n, r)=\rho(u)\big(1+o(1)\big)
    \]
    if $n/r\to u\in(0,\infty)$. As a byproduct of enumeration of elements in an additive arithmetical semigroup missing large factors,
     the last relation (extended to a larger region for $n/r$) has appeared in the first author's paper \cite{EM-LMJ92}.
     The result is contained in Theorem 3 below. The present paper fills up missing details in its sketchy and indirect proof.

\begin{theorem}\label{thm1} As above, let $x=x(n/r)$.  Then
\[
P\big(\ell_r(\bar Z)=n\big)=\frac{Q(x)}{\sqrt{2\pi \lambda(x)}}\left(1+\frac{Br}{n}\right)
\]
provided that $ 1\leq r\leq c n(\log n)^{-1}(\log\log n)^{-2}$,
where $c=1/(12\pi^2 \re)$ and $n\geq 4$.
\end{theorem}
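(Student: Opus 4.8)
The plan is to estimate the Cauchy integral
\[
P\big(\ell_r(\bar Z)=n\big)=\frac{1}{2\pi i}\int_{|z|=x}\exp\bigg\{\sum_{j=1}^r\frac{z^j-1}{j}\bigg\}\frac{dz}{z^{n+1}}
\]
via the saddle point method with $\alpha=x=x(n/r)$, chosen so that $\lambda(x)=\sum_{j\le r}jx^j$ matches the logarithmic derivative condition coming from \eqref{saddle}; indeed differentiating $\sum_{j\le r}x^j=ur=n$ shows $x$ is exactly the saddle point of $z\mapsto Q(z)$. Writing $z=xe^{i\theta}$, the integral becomes $\frac{Q(x)}{2\pi}\int_{-\pi}^{\pi}\exp\{g(\theta)\}\,d\theta$ where $g(\theta)=\sum_{j\le r}\frac{x^j}{j}(e^{ij\theta}-1)-in\theta$, with $g(0)=0$, $g'(0)=0$ and $-g''(0)=\lambda(x)$. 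The goal is to show the contribution is concentrated in a minor arc $|\theta|\le\theta_0$ on which $g(\theta)=-\tfrac12\lambda(x)\theta^2+(\text{error})$, that the error is $O(r/n)$ relative to the main term after integrating, and that the major-arc remainder $|\theta|>\theta_0$ is exponentially negligible.

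The key steps, in order. \textbf{(i) Control of $\lambda(x)$ and $x$.} From $\sum_{j\le r}x^j=n$ one gets crude bounds: when $n/r$ is bounded $x$ is bounded, and in general $x=\Theta\big((n/r)^{1/r}\big)$ up to the relevant range, while $\lambda(x)\ge x^r\asymp$ something growing; the precise point is to verify $\lambda(x)\ge c'n$ in the admissible range so that $1/\sqrt{\lambda(x)}$ is the correct order and the relative error bound $Br/n$ is meaningful. \textbf{(ii) Minor arc, $|\theta|\le\theta_0$.} Taylor expand $g(\theta)$ to third order: $g(\theta)=-\tfrac12\lambda(x)\theta^2-\tfrac{i}{6}\mu_3(x)\theta^3+B\mu_4(x)\theta^4$ where $\mu_k(x)=\sum_{j\le r}j^{k-1}x^j$. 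Since $\mu_k(x)\le r^{k-2}\lambda(x)$, choosing $\theta_0$ a suitable multiple of $(\log\lambda(x))^{1/2}\lambda(x)^{-1/2}$ makes the cubic and quartic terms small on the arc; substituting $t=\theta\sqrt{\lambda(x)}$, expanding $\exp\{\text{cubic}+\text{quartic}\}=1+O(\cdot)$, and performing the resulting Gaussian integrals (the odd cubic term integrates to zero to leading order, leaving the next correction of size $\mu_3^2/\lambda^3+\mu_4/\lambda^2\ll r^2/n$) yields $\frac{1}{2\pi}\int_{|\theta|\le\theta_0}e^{g(\theta)}d\theta=\frac{1}{\sqrt{2\pi\lambda(x)}}\big(1+Br^2/n\big)$ — one must then sharpen this to $Br/n$ as claimed, using that the genuine leading correction is $\mu_4/\lambda^2$ and $\mu_3^2/\lambda^3$, each of which is $O(r/n)$ rather than $O(r^2/n)$ because of an extra factor gained from $\lambda(x)\gg n/r$ type inequalities. \textbf{(iii) Major arc, $|\theta|>\theta_0$.} Bound $\mathrm{Re}\,g(\theta)=\sum_{j\le r}\frac{x^j}{j}(\cos j\theta-1)=-\sum_{j\le r}\frac{2x^j}{j}\sin^2(j\theta/2)$ from above by isolating the $j=1$ term: $\mathrm{Re}\,g(\theta)\le-2x\sin^2(\theta/2)$, which on $|\theta|>\theta_0$ is at most $-c''x\theta_0^2\le-c''\log\lambda(x)$ for $|\theta|$ not too close to $\pi$; near $\theta=\pi$ one instead uses the $j=1$ and $j=2$ terms together (the classical trick: $\sin^2(\theta/2)+\sin^2(\theta)$ cannot both vanish). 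This gives the major-arc contribution $\ll e^{-c''\log\lambda(x)}=\lambda(x)^{-c''}$, which is $o(\lambda(x)^{-1/2}\cdot r/n)$ in the stated range — and this is precisely where the explicit constant $c=1/(12\pi^2e)$ and the $(\log n)(\log\log n)^2$ factor enter, to guarantee $\lambda(x)$ is large enough that the power saving beats $r/n$.

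I expect the main obstacle to be step (iii) in the regime where $r$ is as large as allowed, i.e. $r\asymp n/((\log n)(\log\log n)^2)$: there $x\to 1$, so $x\sin^2(\theta/2)$ is only of order $\sin^2(\theta/2)$ and does not by itself produce a large enough gain on the minor-arc boundary $\theta_0\asymp\sqrt{(\log\lambda)/\lambda}$ with $\lambda\asymp n$. One must squeeze the bound by summing several low-frequency terms $\sum_{j\le J}\frac{x^j}{j}\sin^2(j\theta/2)$ with $J$ a slowly growing parameter, using that for $|\theta|\in[\theta_0,\pi]$ at least one $j\le J$ has $\|j\theta/2\pi\|$ bounded below, and carefully tracking how the constant degrades — this pins down the exact shape of the admissible range and the value of $c$. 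A secondary, more bookkeeping, difficulty is making the minor-arc correction honestly $Br/n$ rather than $Br^2/n$; this requires noticing that the error is controlled by $x^r$-type tail sums which are comparable to $\lambda(x)$ itself divided by an extra factor of roughly $n/r$, not merely by $r^{\pm}$ powers, and is cleanest to phrase in terms of the ratios $\mu_{k}(x)/\lambda(x)\le\mu_k(x)/x^r$ and the defining relation $\sum_j x^j=n$.
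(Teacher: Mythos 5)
Your overall architecture (Cauchy integral at $|z|=x$, Gaussian main term on a minor arc plus exponential decay on a major arc) matches the paper's Section 3, and your minor-arc analysis is essentially theirs: expand $g$ to fourth order in $\theta$, use $\lambda_k\leq r^ku$ and $\lambda\sim nr$ to show the cubic and quartic corrections contribute $B/u\sqrt{\lambda}=B r/(n\sqrt{\lambda})$ after the Gaussian integrals. One small bookkeeping point: you do not need a two-stage ``sharpen $r^2/n$ to $r/n$'' argument; $\lambda_4/\lambda^2\leq r^4u/(nr)^2=r/n$ and $\lambda_3^2/\lambda^3\leq r^6u^2/(nr)^3=r/n$ give $Br/n$ directly. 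The paper also works with a concrete cutoff $t_0=r^{-7/12}n^{-5/12}$ rather than $\sqrt{(\log\lambda)/\lambda}$, and splits the complement into $t_0<|t|\leq 1/r$ (handled with the elementary bound $1-\cos t\geq 2t^2/\pi^2$ applied termwise) and $1/r\leq|t|\leq\pi$ (handled by a dedicated lemma), which is cleaner than a single major-arc estimate.

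The genuine gap is in your step (iii). You propose isolating the $j=1$ (and near $\theta=\pi$, the $j=1,2$) contributions, and you correctly diagnose that this dies when $x\to 1$, then reach for a Dirichlet-type equidistribution argument over a slowly growing set of frequencies $j\leq J$. That route is both messier and, as sketched, insufficient: with $x\approx 1$ and $\theta$ bounded away from $0$ the sum $\sum_{j\leq J}\frac{2x^j}{j}\sin^2(j\theta/2)$ is of size $\asymp\log J$, so to beat $\lambda^{-1/2}\cdot(r/n)\asymp n^{-1}$ you would need $J$ essentially of size $r$, not ``slowly growing''; tracking how the constant degrades through the three-distance / continued-fraction casework for $\theta$ near rationals with small denominator would be a serious undertaking, and it is far from clear this yields the explicit $c=1/(12\pi^2\re)$ and the $(\log n)(\log\log n)^2$ range. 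The paper sidesteps all of this with a single inequality (Lemma \ref{7lema}): since $\cos(jt)-1\leq 0$ and $1/j\geq 1/r$ for $j\leq r$, one has
\[
\Re\,g_r(t,y)=\sum_{j=1}^r\frac{y^j(\cos jt-1)}{j}\leq \frac1r\sum_{j=1}^r y^j(\cos jt-1),
\]
and the right-hand side is a closed-form geometric sum, which after elementary manipulation yields
\[
\Re\,g_r(t,y)\leq -\frac{2}{\pi^2}\,\frac{y^{r+1}}{r(y-1)}\,\frac{t^2}{(y-1)^2+t^2}+\frac{2y}{r(y-1)}.
\]
Specializing to $y=x$ and $1/r\leq|t|\leq\pi$, and using $x^{r+1}/(x-1)\geq ur$ and $\log u\leq r(x-1)\leq 2u^{2/(r+1)}\log u$, this produces the decisive bound $\Re\,g_r(t)\leq -\tfrac{1}{4\pi^2}u^{1-4/(r+1)}/\log^2u+2/r+2/\log u$, from which the explicit admissible range and constant $c$ drop out by a short case distinction ($4\log u>r+1$ versus $4\log u\leq r+1$). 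The trick of majorizing $1/j$ by $1/r$ uniformly (legitimate because every summand is nonpositive) and then summing the geometric series is the key idea your plan is missing; it replaces the whole minor-arc exponential-sum machinery with one line.
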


 Actually, the result holds for all $1\leq r\leq n$. The bound for $r$ is left to show the limitations of the applied approach.
We will prove in Lemma \ref{lema11} of Section 5 that
\begin{eqnarray}
  x&=&
  n^{1/r}-\frac{1}{r}  -\sum_{N=2}^{r}\frac{ \Gamma(N+(N-1)/r)}{(N-1)\Gamma(N+1)\Gamma((N-1)/r)}
   n^{-(N-1)/r} \nonumber\\
   &&\quad    + \frac{1}{r}n^{-1+1/r}+\frac{B}{n},
 \label{xnr-Skleid}
 \end{eqnarray}
 if $1\leq r\leq \log n$. Nevertheless, a direct proof of Theorem 1 by the use of (\ref{xnr-Skleid}) and Theorem 2 would be rather involved. We press more on the Lagrange-B\"{u}rmann inversion formulas, instead. Lemmas \ref{2lema} and \ref{4lema} below provide approximations of $x$ and $\lambda(x)$  for larger $r$. Then it is preferable to apply another technique giving even a sharper remainder term. To catch an idea, let us obtain explicit expressions of the main term of the probability examined in Theorem 2.

  For $u>1$, define $\xi=\xi(u)$ as the nonzero solution to the equation
 \begin{equation}
                 \re^\xi=1+u\xi
\label{alpha}
\end{equation}
and put $\xi(1)=0$. Denote also
\[
              I(s)=\int_0^s \frac{\re^v-1}{v} dv, \quad s\in \C.
\]
Let $\gamma$ denote the Euler--Mascheroni constant.

\s
\begin{corollary} \label{1cor} If $n\geq 4$ and
$
     \sqrt{n\log n}\leq r\leq cn(\log n)^{-1}(\log\log n)^{-2},
$
then
  \begin{eqnarray*}
P\big(\ell_r(\bar Z)=n\big)&=& \frac{1}{\sqrt{2\pi rn}}\exp\bigg\{ I\Big(\frac{n}{r}\Big)-\frac{n}{r}\xi\Big(\frac{n}{r}\Big)\bigg\}\bigg(1+
\frac{Bn\log(n/r)}{r^2}+\frac{B r}{n}\bigg)\\
&=&
\frac{\re^{-\gamma}}{r} \rho\Big(\frac{n}{r}\Big)\bigg(1+\frac{Bn\log(n/r)}{r^2}+\frac{B r}{n}\bigg).
\end{eqnarray*}
\end{corollary}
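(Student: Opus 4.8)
The plan is to derive Corollary~\ref{1cor} from Theorem~\ref{thm1} by rendering the main factor $Q(x)/\sqrt{2\pi\lambda(x)}$ fully explicit and then recognizing the Dickman function. Write $u=n/r$; the hypothesis $r\le cn(\log n)^{-1}(\log\log n)^{-2}$ forces $u>1$, so that $\xi(u)$ is well defined, while the hypothesis $r\ge\sqrt{n\log n}$ will be precisely what keeps the error terms below negligible.

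\emph{Step 1: the saddle point in terms of $\xi$.} Using Lemma~\ref{2lema} I would record an approximation of $x=x(n/r)$ valid throughout the present range of $r$. The mechanism behind it is that, since $\sum_{j\le r}x^{j}=n$ and $t\mapsto x^{t}$ is increasing, replacing the sum by $\int_{0}^{r}x^{t}\,\rd t=(x^{r}-1)/\log x$ and putting $\eta:=r\log x$ recasts the saddle equation~(\ref{saddle}) as $(\re^{\eta}-1)/\eta=u$ up to an Euler--Maclaurin error, i.e.\ as $\re^{\eta}=1+u\eta$; hence $\eta$ coincides with $\xi(u)$ up to a quantity controlled by Lemma~\ref{2lema}. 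In the same way, Lemma~\ref{4lema} furnishes $\lambda(x)=\sum_{j\le r}jx^{j}\sim\int_{0}^{r}tx^{t}\,\rd t\sim rn$, so that $(2\pi\lambda(x))^{-1/2}=(2\pi rn)^{-1/2}(1+o(1))$.

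\emph{Step 2: evaluating $Q(x)$.} Write $\log Q(x)=-n\log x+\sum_{j=1}^{r}(x^{j}-1)/j=-u\eta+\sum_{j=1}^{r}(x^{j}-1)/j$. For the remaining sum I would again pass to $\int_{0}^{r}(x^{t}-1)t^{-1}\,\rd t$ and substitute $v=t\log x$, which gives exactly $\int_{0}^{\eta}(\re^{v}-1)v^{-1}\,\rd v=I(\eta)$. Thus $\log Q(x)=-u\eta+I(\eta)+(\text{error})$, and since the map $\eta\mapsto -u\eta+I(\eta)$ is stationary at $\eta=\xi(u)$ (its derivative there is $-u+(\re^{\xi(u)}-1)/\xi(u)=0$), the small discrepancy between $\eta$ and $\xi(u)$ only enters at second order; so $\log Q(x)=-u\xi(u)+I(\xi(u))+(\text{error})$. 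Feeding this together with Step~1 into Theorem~\ref{thm1} yields the first displayed identity of the Corollary.

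\emph{Step 3: the Dickman function and the error terms.} For the second identity I would invoke the classical saddle--point asymptotics for the Dickman function: with the very $\xi(u)$ of~(\ref{alpha}), $\rho(u)=(2\pi u)^{-1/2}\exp\{\gamma-u\xi(u)+I(\xi(u))\}(1+\rO(1/u))$ (see~\cite{GT}); substituting $u=n/r$, using $\sqrt{2\pi(n/r)}/\sqrt{2\pi rn}=1/r$ and $\rO(1/u)=\rO(r/n)$, converts the first identity into $P(\ell_{r}(\bar Z)=n)=\re^{-\gamma}r^{-1}\rho(n/r)(1+\cdots)$, the constant $\re^{-\gamma}$ surfacing at exactly this comparison. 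The crux is the error bookkeeping in Steps~1--2. The dominant contribution is the Euler--Maclaurin remainder in $\sum_{j\le r}(x^{j}-1)/j\approx I(r\log x)$: for the increasing integrand $h(t)=(x^{t}-1)/t$ it is bounded by $h(r)-h(0^{+})=(x^{r}-1)/r-\log x$, which is of order $(n/r)\xi(n/r)/r\asymp n\log(n/r)/r^{2}$, and this is $\ro(1)$ exactly when $r$ is of larger order than $\sqrt{n\log n}$ --- the reason for the lower bound on $r$. One must also check that the error of Lemma~\ref{2lema} for $x$ survives being raised to powers $x^{j}$ with $j$ up to $r$ (where already $x^{r}=\re^{\xi(n/r)}$ is of order $(n/r)\log(n/r)$) without blowing up, and that the correction term in $\lambda(x)$ from Lemma~\ref{4lema} is compatible with the $(2\pi u)^{-1/2}$ normalization used for $\rho$; collecting everything with the $Br/n$ inherited from Theorem~\ref{thm1} then gives the stated remainder $1+Bn\log(n/r)/r^{2}+Br/n$.
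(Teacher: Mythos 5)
Your proposal is correct and follows essentially the same path as the paper's proof: substitute $r\log x\approx\xi(u)$ (Lemma~\ref{4lema}) and $\lambda(x)\approx r^2/\xi'(u)$ into Theorem~\ref{thm1}, control the sum-to-integral error in $\sum_{j\le r}(x^j-1)/j\to I(r\log x)$, and then invoke Lemma~\ref{rho-lema} to surface $\rho(u)$ and the factor $\re^{-\gamma}$. The only substantive difference is your treatment of the discretization error: the paper writes an exact identity $\log Q(x)=-ur\log x+I(r\log x)+T(r\log x)$ and bounds $T$ via the Bernoulli-number expansion of Lemma~\ref{6lema}, whereas you use the monotonicity bound $h(r)-h(0^+)$ for the increasing integrand $h(t)=(x^t-1)/t$; both yield the same dominant order $n\log(n/r)/r^2$, and your observation that $\eta\mapsto-u\eta+I(\eta)$ is stationary at $\eta=\xi(u)$ is correct but inessential, since the Euler--Maclaurin/$T$-term already dominates the error budget regardless.
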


Consequently, the approximation involving the Dickman function holds in a rather wide region for $n/r$. This makes us even more greedy and  motivates in searching for another approach to refine the corollary.

\begin{theorem}\label{thm2} If  $\sqrt{n\log n}\leq r\leq n$ and $n\geq 1$, then
  \[
P\big(\ell_r(\bar Z)=n\big)=\frac{\re^{-\gamma}}{r}\rho\Big(\frac{n}{r}\Big)\bigg(1+\frac{Bn\log(n/r+1)}{r^2}\bigg).
\]
\end{theorem}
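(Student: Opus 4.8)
The plan is to analyze the Cauchy integral \eqref{C-integr} with the radius $\alpha=x=x(n/r)$ directly, but now in the regime where $n/r$ is bounded below (so $x$ is bounded) rather than in the ``large $u$'' saddle-point regime. Here the integrand $\exp\{\sum_{j\le r}(z^j-1)/j\}z^{-n-1}$ is no longer Hayman-admissible in a quantitative way uniform in $r$, so instead I would split the kernel as $\sum_{j\le r}z^j/j = -\log(1-z) - \sum_{j>r}z^j/j$ and compare with the generating function $(1-z)^{-1}$ of $\rho$-type. More precisely, write $P(\ell_r(\bar Z)=n)=e^{-H_r}[z^n]\exp\{-\sum_{j>r}z^j/j\}(1-z)^{-1}$ where $H_r=\sum_{j\le r}1/j$, so that $e^{-H_r}=e^{-\gamma}r^{-1}(1+O(1/r))$ by Mertens. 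The function $g_r(z):=\exp\{-\sum_{j>r}z^j/j\}=(1-z)\exp\{\sum_{j\le r}z^j/j\}$ is analytic in $|z|<1$, and its $n$th coefficient times $e^{-H_r}$ is exactly what we want; the heuristic is that $g_r(z)\approx\exp\{-I_r(z)\}$ where $\sum_{j>r}z^j/j$ behaves, for $z$ near $1$, like an incomplete version of $-\log(1-z)$ whose ``tail'' controls the Dickman shape.

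First I would set up the change of variables $z=e^{-s/r}$ (or $z^r=e^{-s}$), turning $\sum_{j>r}z^j/j$ into a Riemann sum for $\int_1^\infty (e^{-sv}/v)\,dv$, i.e. into the exponential-integral $E_1(s)$; this is exactly the transform that produces the Laplace representation of $\rho$, namely $\rho(u)=\frac{1}{2\pi i}\int_{c-i\infty}^{c+i\infty}\exp\{us+I(s)\}\,ds$ with $I$ as defined before the Corollary (after folding $\xi$ in via \eqref{alpha}). So the target asymptotic $\frac{e^{-\gamma}}{r}\rho(n/r)$ is precisely $e^{-H_r}$ times the saddle-point evaluation of $[z^n]g_r$ where the saddle sits at $z=e^{-\xi(n/r)/r}$, i.e. at $z=x$ up to lower-order corrections — and Lemmas \ref{2lema} and \ref{4lema} (which the excerpt promises) give exactly the needed approximations $x\approx e^{-\xi(u)/r}$ and $\lambda(x)\approx$ the corresponding second-derivative factor, valid in the range $\sqrt{n\log n}\le r\le n$. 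The bulk of the work is then a contour integration on $|z|=x$: on the minor arc near $z=x$ one does a second-order Taylor/Laplace expansion of $\log g_r(z) - n\log z$ and matches it to $\log\rho(n/r)$ plus the Gaussian normalization $1/\sqrt{2\pi r n}$ (note $\lambda(x)\sim rn$ in this regime); on the major arc one needs $|g_r(z)/g_r(x)|$ small enough, uniformly, to make the contribution negligible.

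The key steps, in order: (i) convert to the variable $s$ and identify $\sum_{j>r}z^j/j$ with $I(s)$ up to an explicit Euler--Maclaurin error of size $O(n\log(n/r+1)/r^2)$ — this is where the claimed relative error originates, since the difference between the discrete sum and the integral $I$ scales like (typical term)$\times$(length)$/r$ and the typical $s$ is of order $\xi(n/r)\asymp\log(n/r+1)$; (ii) use the known analytic properties of $\rho$ — in particular $\rho(u)=\Gamma(u+1)^{-1+o(1)}$, the bound $\rho(u)\le\Gamma(u+1)^{-1}$, and the smoothness of $u\mapsto\log\rho(u)$ via the equation $u\rho'(u)=-\rho(u-1)$ — to turn the saddle-point value of $[z^n]g_r$ into $\rho(n/r)$ times a harmless factor; (iii) dispatch the major arc, bounding $\mathrm{Re}\,\sum_{j>r}(z^j-x^j)/j$ from below by a positive multiple of $|1-z/x|^2$ (times something like $r$), which is the standard ``decay away from the real saddle'' estimate for these partial theta–like kernels; (iv) assemble, invoking Mertens for $e^{-H_r}$ and Lemmas \ref{2lema}, \ref{4lema} for $x$ and $\lambda(x)$, and the Corollary's first line as a consistency check where the ranges overlap.

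The main obstacle I expect is step (iii), the uniform major-arc bound: $g_r$ is not a ``nice'' function like $(1-z)^{-\beta}$, and near $z=-x$ (or other points where $z^r$ is close to $1$ in argument) the partial sum $\sum_{j>r}z^j/j$ can become large and oscillatory, so one must show its real part is still controlled — essentially that $\mathrm{Re}\sum_{j>r}e^{ij\theta}x^j/j$ does not dip too far below its value at $\theta=0$. I would handle this by the same Riemann-sum comparison as on the minor arc but now keeping track of the oscillation: writing $z=xe^{i\theta}$, the sum is $\approx\int_1^\infty x^{rv}e^{ir\theta v}/v\,dv$, whose real part one bounds below using that $\int x^{rv}(1-\cos(r\theta v))/v\,dv$ is nonnegative and grows like $\log(1/|1-e^{i\theta x^{-?}}|)$-type quantities; the delicate case is $|\theta|$ small where one needs the quadratic lower bound, and this is where Lemma \ref{4lema}'s estimate on $\lambda(x)$ (the curvature) does the real work. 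Once the major arc is shown to contribute a relative $O(e^{-cr\theta_0^2})$ which is negligible against any power of $r/n$, the rest is bookkeeping.
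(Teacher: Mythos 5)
Your high-level plan — use the Cauchy integral, change variables $z=\re^{-s/r}$, recognize the Laplace transform $\hat\rho$, and bound the discrete-vs-continuous error by something of size $n\log(n/r+1)/r^2$ — is in the right spirit and correctly identifies where the claimed error term comes from. The paper's proof (Section 4) does exactly this: it sets the radius equal to $y=\re^{\xi(u)/r}$, rewrites $Q(\re^{-s/r})=\hat\rho(s)\exp\{-\gamma+us+T(-s)\}$ where $T$ is the function of Lemma \ref{6lema} measuring the sum-versus-integral discrepancy, and then integrates $\hat\rho$ over a truncated vertical line.

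However, there is a concrete obstruction in the way you set it up. You propose to isolate the tail as
$\exp\bigl\{-\sum_{j>r}z^j/j\bigr\}(1-z)^{-1}$
and to compare $\sum_{j>r}z^j/j$ with the exponential integral $E_1(s)=\int_1^\infty \re^{-sv}v^{-1}\,dv$. But the saddle contour sits at $|z|=\re^{\xi/r}>1$, i.e.\ at $\Re s=-\xi<0$, where $\sum_{j>r}z^j/j$ diverges term by term and $E_1(s)$ has no convergent integral representation. The decomposition $-\log(1-z)-\sum_{j>r}z^j/j$ is a power-series identity valid only inside the unit disk; on the saddle circle it is meaningless as written, so your ``Riemann sum for $\int_1^\infty\re^{-sv}/v\,dv$'' step does not get off the ground. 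The paper avoids this by replacing the whole partial sum $\sum_{j\leq r}z^j/j$ by the \emph{entire} function $I(r\log z)$ plus the \emph{entire} correction $T(r\log z)$ (equation \eqref{Tx}); both pieces are defined by finite-interval integrals and cause no convergence trouble for $\Re s<0$. Relatedly, the paper does not use the exact saddle $x$ here (you propose $\alpha=x$ and also have a sign slip, writing $z=\re^{-\xi/r}$ where it should be $\re^{+\xi/r}$); it uses the approximate saddle $\re^{\xi/r}$ precisely so that $s=-\xi+i\tau$ is a vertical line, $z^r=\re^{\xi}$ exactly at $\tau=0$, and the Laplace inversion of $\hat\rho$ appears without further manipulation.

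Your step (iii) — the ``major arc'' bound you flag as the main obstacle — is exactly what the paper's change of perspective eliminates. Once the integrand is $\re^{us}\hat\rho(s)(\ldots)$, decay away from $\tau=0$ is not something one has to prove from scratch for the partial-theta-like kernel; it is inherited wholesale from the known Tenenbaum bounds on $\hat\rho(s)$ in Lemma \ref{rholap} (exponential decay in $\tau^2 u$ for $|\tau|\leq\pi$, and $\hat\rho(s)=1/s+O((1+u\xi)/|s|^2)$ for large $|\tau|$, which handles the tail of the contour by an integration by parts). The remaining work is to bound the $T$-correction via the Bernoulli-number expansion of Lemma \ref{6lema}, giving $T(-s)=B(1+u\log u)/r$ on $\Delta_0$ and thus the stated relative error; this is the precise tool replacing your hand-waved Euler--Maclaurin estimate. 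Finally, Lemmas \ref{2lema} and \ref{4lema}, which you invoke, are not actually used in the paper's proof of Theorem 3 — they serve Theorem 2 and Corollary \ref{1cor}; Theorem 3's proof rests instead on Lemmas \ref{1lema}, \ref{rho-lema}, \ref{rholap}, and \ref{6lema}.
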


Having in mind an analogy with number theory, one can guess that the last approximation using Dickman's function is hardly further extendable.
The next corollary justifies one expression for all possible $r$.

\begin{corollary}\label{2cor} For $1\leq r\leq n$, we have
\begin{equation}
P\big(\ell_r(\bar Z)=n\big)=\frac{Q(x)}{\sqrt{2\pi \lambda(x)}}\left(1+\frac{Br}{n}\right).\label{fullx}
\end{equation}
\end{corollary}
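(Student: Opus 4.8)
The plan is to stitch together the three theorems, whose combined hypotheses already cover the full range $1\le r\le n$, and then to reconcile the two different shapes of the main term (the saddle-point form $Q(x)/\sqrt{2\pi\lambda(x)}$ versus the Dickman form $(\re^{-\gamma}/r)\rho(n/r)$) on the overlap region. First I would split $[1,n]$ into the two pieces $1\le r\le r_0:=cn(\log n)^{-1}(\log\log n)^{-2}$ and $\sqrt{n\log n}\le r\le n$; since $r_0\gg\sqrt{n\log n}$ for $n$ large, these two intervals overlap and together exhaust $[1,n]$ (small $n$ being handled by absorbing them into the constant in $B$). On the first piece Theorem \ref{thm1} gives exactly \eqref{fullx} with error $Br/n$, so there is nothing to do. On the second piece Theorem \ref{thm2} gives the Dickman form with error $Bn\log(n/r+1)/r^2$, and the task is to show this equals \eqref{fullx}.

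The key step is therefore an \emph{identification of the two main terms together with a comparison of the two error terms} on the range $r\ge \sqrt{n\log n}$. For the main term I would invoke Lemmas \ref{2lema} and \ref{4lema} (the approximations of $x=x(n/r)$ and $\lambda(x)$ for large $r$ announced in the excerpt), together with the change of variables $z=\re^{\xi/r}$-type substitution relating the saddle equation \eqref{saddle} to \eqref{alpha}; this is precisely the computation already carried out in Corollary \ref{1cor}, where $Q(x)/\sqrt{2\pi\lambda(x)}$ was shown to equal $(2\pi rn)^{-1/2}\exp\{I(n/r)-(n/r)\xi(n/r)\}$ and then $(\re^{-\gamma}/r)\rho(n/r)$ up to a relative error $Bn\log(n/r)/r^2+Br/n$. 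So for $\sqrt{n\log n}\le r\le r_0$ the identity of the main terms is immediate from Corollary \ref{1cor}, and one only needs to check that the error terms are mutually absorbable there, which follows because $Br/n$ and $Bn\log(n/r+1)/r^2$ are both $\ro(1)$ and comparable in that range. For $r_0<r\le n$, where Theorem \ref{thm1} does not apply, I would instead run the same saddle-point asymptotics for $Q(x)/\sqrt{2\pi\lambda(x)}$ directly — the function $\exp\{\sum_{j\le r}z^j/j\}$ is still Hayman-admissible and Lemmas \ref{2lema}, \ref{4lema} still supply the needed expansions of $x$ and $\lambda(x)$ — to re-derive the equality $Q(x)/\sqrt{2\pi\lambda(x)}=(\re^{-\gamma}/r)\rho(n/r)(1+Bn\log(n/r+1)/r^2)$, and then quote Theorem \ref{thm2}.

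The main obstacle I anticipate is the bookkeeping on the \emph{overlap and endpoint regions}, rather than any single hard estimate: near $r=n$ one has $n/r\to 1$, $\rho(n/r)\to 1$, $x(n/r)\to 1$, and several of the "$B/n$" and "$Bn\log(n/r+1)/r^2$" error terms degenerate or nearly cancel, so one must be careful that the claimed uniform bound $Br/n$ in \eqref{fullx} is genuinely what comes out (in particular that $\log(n/r+1)/r$ is $\ro(1/n)\cdot(\text{something})$ is \emph{not} needed — rather $n\log(n/r+1)/r^2=Br/n$ up to constants on $r\gtrsim\sqrt n$, which must be verified by an elementary inequality). A secondary technical point is ensuring the constant absorbed into $B$ can be taken absolute and independent of which of the two theorems is being applied; this is routine but should be stated, e.g. by fixing $n_0$ so large that $r_0>\sqrt{n\log n}$ for $n\ge n_0$ and treating $n<n_0$ separately. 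Once these are dispatched, \eqref{fullx} follows in all cases $1\le r\le n$.
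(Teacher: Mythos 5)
Your route is the same as the paper's: use Theorem~\ref{thm1} directly for $r\le r_0:=cn(\log n)^{-1}(\log\log n)^{-2}$, and for $r>r_0$ combine Theorem~\ref{thm2} with relations \eqref{Qx}, \eqref{lambdax} and Lemma~\ref{rho-lema} --- exactly the computation already carried out in the proof of Corollary~\ref{1cor} --- to rewrite the Dickman main term as the saddle-point main term. Two imprecisions in your write-up are worth flagging. First, no Hayman admissibility or fresh ``saddle-point asymptotics'' enters the identification $Q(x)/\sqrt{2\pi\lambda(x)}=(\re^{-\gamma}/r)\rho(u)\big(1+Bu\log(u+1)/r+B/u\big)$; that step is purely algebraic from \eqref{Qx}, \eqref{lambdax} and Lemma~\ref{rho-lema}, valid throughout $1<u\le\sqrt{n/\log n}$, i.e.\ $r\ge\sqrt{n\log n}$. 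Second, the claim $n\log(n/r+1)/r^2=Br/n$ for $r$ down to about $\sqrt n$ is \emph{false}: at $r=\sqrt{n\log n}$ the left side is of order $1$ while $r/n\to 0$. The comparison, equivalently $u^3\log(u+1)=Bn$, holds only once $r>r_0$, but that is the only place it is needed, since Theorem~\ref{thm1} already delivers \eqref{fullx} outright for every $r\le r_0$; your ``overlap'' region never calls on Theorem~\ref{thm2} at all.
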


On the other hand, Theorem \ref{thm2} gives a better remainder term than $ {Br}/{n}$ if $n^{2/3}\log^{1/3} n<r\leq n$.

 The paper is organized as follows. Section 2 collects known and new auxiliary properties of the involved functions and the saddle point approximations.
  Theorem \ref{thm1} and Corollary \ref{1cor} are proved in Section 3. Section 4 is devoted to Theorem \ref{thm2} and Corollary \ref{2cor}. A detailed proof of Theorem 1 is given in the last section.

\section{Auxiliary Lemmas}

Throughout the section, we assume that $r\geq 2$ if it is not indicated otherwise. Let  $\xi(u)$,  $\rho(u)$, and
  $ x(u):=x_r(u)$ be the functions defined above for $u\geq 1$. Recall that they are positive and differentiable if $u>1$.
   We will often  use the abbreviation $f=f(u)$ and $f'=f'(u)$ for the values at the point $u$, where $f(v)$, $v>1$,  is any of the involved functions.

\begin{lemma}\label{1lema} If  $u>1$, then $\log u<\xi:=\xi(u)\leq 2\log u$,
\[
\xi=\log u +\log\log (u+2)+\frac{B\log\log (u+2)}{ \log (u+2)}
\]
and
\begin{equation}
\xi':=\xi'(u)=\frac{1}{u}\,\frac{\xi}{\xi-1+1/u}=\frac{1}{u}\exp\bigg\{\frac{B}{\log (u+1)}\bigg\}.
\label{deriv}
\end{equation}
\end{lemma}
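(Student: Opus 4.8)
The plan is to treat the defining equation $\re^\xi = 1+u\xi$ as an implicit relation and extract the asymptotics of $\xi$ and $\xi'$ by elementary estimates, bootstrapping from a crude bound to the sharp expansion. First I would establish the two-sided bound $\log u < \xi \le 2\log u$. For the lower bound, note that at $v=\log u$ we have $\re^v = u$ while $1+uv = 1+u\log u$, and since $u\log u > u$ for $u>1$ one must move to larger $v$ to reach equality; monotonicity of $\re^v - 1 - uv$ beyond its unique positive critical point (which lies to the left, since the derivative $\re^v - u$ vanishes at $v=\log u$) then gives $\xi>\log u$. For the upper bound, evaluate at $v = 2\log u$: there $\re^v = u^2$ and $1+uv = 1+2u\log u$; the inequality $u^2 \ge 1+2u\log u$ holds for all $u\ge 1$ (check at $u=1$ and compare derivatives), so the root $\xi$ lies at or below $2\log u$.

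Next I would sharpen this to the displayed expansion $\xi = \log u + \log\log(u+2) + B\log\log(u+2)/\log(u+2)$. Rewriting the equation as $\xi = \log(1+u\xi) = \log u + \log\xi + \log(1 + 1/(u\xi))$ and substituting the crude bound $\xi \asymp \log u$ gives $\log \xi = \log\log u + O(\log\log u/\log u)$ after another round of substitution; the term $\log(1+1/(u\xi))$ is $O(1/(u\log u))$, hence absorbed. Replacing $\log u$ by $\log(u+2)$ inside the iterated logarithm only changes things by $O(1/(u\log u))$ for large $u$ and is harmless for $u$ near $1$ because then everything is $O(1)$ and the $B$-term on the right is bounded below away from $0$; I would simply remark that the shift to $u+2$ is a cosmetic device ensuring $\log\log(u+2)$ is defined and positive for all $u\ge 1$, absorbing the discrepancy into the constant.

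For the derivative formula, I would differentiate $\re^\xi = 1+u\xi$ implicitly: $\re^\xi \xi' = \xi + u\xi'$, so $\xi'(\re^\xi - u) = \xi$, and since $\re^\xi - u = 1+u\xi - u = 1 + u(\xi-1) = u(\xi - 1 + 1/u)$, we get exactly $\xi' = \frac{1}{u}\cdot\frac{\xi}{\xi-1+1/u}$. The remaining claim is that this ratio equals $\exp\{B/\log(u+1)\}$, i.e. that $\log\big(\xi/(\xi-1+1/u)\big) = B/\log(u+1)$. Since $\xi/(\xi - 1 + 1/u) = 1 + (1 - 1/u)/(\xi - 1 + 1/u)$ and $\xi > \log u$ forces the denominator to grow like $\log u$ while the numerator $1-1/u$ stays in $[0,1)$, the fraction is $O(1/\log(u+1))$; taking logs and using $\log(1+t) = O(t)$ for $t$ bounded gives the stated form. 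Here one must again handle $u$ close to $1$ separately: as $u\to1^+$ both $\xi\to0$ and $\xi - 1 + 1/u \to 0$, so I would instead expand $\re^\xi = 1+\xi+\xi^2/2 + O(\xi^3)$ in the defining equation to get $\xi \sim 2(u-1)$ as $u\to1$, from which $\xi/(\xi-1+1/u) \to 1$ and the $B$-term is again $O(1)$, matching $1/\log(u+1)$ which is bounded on any interval bounded away from $\infty$. The main obstacle is precisely this uniformity across the full range $u>1$: the behaviour near the regular singular point $u=1$ is qualitatively different from the large-$u$ regime, so each estimate needs a short separate argument (or a single argument carefully phrased so the $B$-constants absorb both regimes), and keeping track of which quantities vanish and at what rate near $u=1$ is the delicate bookkeeping part.
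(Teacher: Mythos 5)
Your proof of the two-sided bound $\log u < \xi \leq 2\log u$ is correct and essentially equivalent to the paper's: you analyze the sign of $v \mapsto \re^v - 1 - uv$ at $v=\log u$ and $v=2\log u$, while the paper phrases the same comparison through the strict monotonicity of $I'(v) = (\re^v-1)/v$, evaluating $I'$ at the same two test points; the auxiliary inequalities $u\log u - u + 1 > 0$ and $u^2 \geq 1 + 2u\log u$ are identical. Where you genuinely diverge is in the remaining claims: the paper simply cites Hildebrand--Tenenbaum and Tenenbaum's book for the asymptotic expansion of $\xi$ and for the formula~(\ref{deriv}), whereas you supply self-contained arguments. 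The implicit differentiation giving $\xi'(\re^\xi - u) = \xi$ and the rewriting $\re^\xi - u = u(\xi - 1 + 1/u)$ is clean and exactly reproduces the first equality in (\ref{deriv}); the bootstrapping $\xi = \log u + \log\xi + \log(1 + 1/(u\xi))$ correctly yields the $\log\log$ expansion with the stated error, though you would want to write out the second iteration more carefully to nail the $O(\log\log u/\log u)$ rate. This is a reasonable trade: your version is longer but removes an external dependency.

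One concrete slip: near $u=1$ you claim $\xi/(\xi - 1 + 1/u) \to 1$. In fact, writing $u = 1+\varepsilon$, the expansion $\xi^2/2 + O(\xi^3) = \varepsilon\xi$ gives $\xi \sim 2\varepsilon$, and $\xi - 1 + 1/u = \xi - \varepsilon + O(\varepsilon^2) \sim \varepsilon$, so the ratio tends to $2$, not $1$ (consistent with the classical fact $\rho'(1^+)=-1$ and $\xi'(1^+)=2$). This does not damage your argument: the ratio is still bounded and bounded away from $0$, $\log(u+1)$ is bounded near $u=1$, so the exponential form $\exp\{B/\log(u+1)\}$ absorbs it with a bounded $B$. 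But the limit should be corrected, and the remark that "the $B$-term on the right is bounded below away from $0$" is not quite what is needed either — what matters is that $B$ is bounded above and below, with no sign constraint.
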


\textit{Proof}.  To establish the effective bounds for all $u>1$, it suffices to employ the strictly increasing function $I'(v)$. Indeed, the lower bound follows from the inequality
 \[
      u=I'(\xi(u))=\int_0^1\re^{t\xi} dt> I'(\log u)=\frac{u-1}{\log u}
 \]
 following from $ u\log u-u+1>0$  if $u>1$. To prove the upper estimate, it suffices to repeat the same argument.

 The asymptotical formulas for $\xi(u)$ and its derivative can be found in \cite{AH+GT} or in the book \cite{GT}.

 The lemma is proved.

\begin{lemma}\label{rho-lema} For $u\geq 1$,
\[
\rho(u)=\sqrt{\frac{\xi'}{2\pi}}\exp\big\{\gamma -u\xi +I(\xi)\big\}\Big(1+\frac{B}{u}\Big).
\]
\end{lemma}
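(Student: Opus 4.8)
The plan is to obtain the formula by the saddle-point method applied to the inverse Laplace transform of $\rho$. Recall (see \cite{GT}) that the Laplace transform of the Dickman function is
\[
\int_0^\infty \rho(v)\re^{-pv}\,\rd v=\exp\{\gamma+I(-p)\},\qquad p\in\C,
\]
the right-hand side being an entire function of $p$ since $I$ is entire and $\rho$ decays faster than any exponential. Consequently, by Laplace inversion, $\rho(u)=\frac{1}{2\pi i}\int_{\k-i\infty}^{\k+i\infty}\exp\{pu+\gamma+I(-p)\}\,\rd p$ for every $\k>0$. First I would check that $p_0:=-\xi$ is the saddle point: since $I'(w)=(\re^w-1)/w$, the derivative of $pu+I(-p)$ equals $u-(1-\re^{-p})/p$, and this vanishes precisely when $\re^{-p}=1-pu$, i.e.\ at $p=-\xi$ by (\ref{alpha}). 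As the integrand has no singularities, the contour may be shifted to the vertical line $\Re p=-\xi$, the horizontal segments being negligible because $|\exp\{pu+I(-p)\}|\to0$ as $|\Im p|\to\infty$ along any fixed vertical line.

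Next I would parametrise $p=-\xi+it$, $t\in\R$, and write the exponent as $\gamma-u\xi+I(\xi)+\psi(t)$ with $\psi(t)=iut+I(\xi-it)-I(\xi)$. Using $I'(\xi)=(\re^\xi-1)/\xi=u$ one gets $\psi(0)=\psi'(0)=0$, while $\psi''(0)=-I''(\xi)=-(u\xi-u+1)/\xi=-1/\xi'$ by the identity (\ref{deriv}). Splitting the integral at $|t|=t_0$ for a suitable truncation level $t_0$ (of order $(\xi')^{1/2}\log(u+2)$ works), on the central range I would Taylor-expand $\psi(t)=-t^2/(2\xi')+O\big(|I'''(\xi)|\,|t|^3\big)$; since $|I^{(k)}(\xi)|\ll_k\re^\xi/\xi\asymp u$ and $\xi'\asymp1/u$ by Lemma \ref{1lema}, the cubic and higher terms are $\ro(1)$ on the effective range, and because the cubic term is odd it integrates against the Gaussian to a contribution of relative size $O(1/u)$ only. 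The resulting Gaussian integral is $\frac{1}{2\pi}\exp\{\gamma-u\xi+I(\xi)\}\int_\R\exp\{-t^2/(2\xi')\}\,\rd t=\sqrt{\xi'/(2\pi)}\,\exp\{\gamma-u\xi+I(\xi)\}$, which is exactly the claimed main term.

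The main obstacle is the tail estimate: one must show that the part of the integral over $|t|>t_0$ is negligible compared with the main term multiplied by $1/u$. This amounts to a quantitative form of $\Re I(\xi-it)\le I(\xi)$ for all real $t$, with genuine decay; here one uses $\tfrac{\rd}{\rd t}\Re I(\xi-it)=\Im I'(\xi-it)$ and analyses $I'(w)=(\re^w-1)/w$ along the line $\Im w=\xi$, so that $\Re I(\xi-it)$ is decreasing for small $t>0$ and, for large $|t|$, $\Re I(\xi+it)=-\log|t|+O(1)\to-\infty$. All constants must be kept uniform in $u\ge1$; for $u$ in any bounded interval $[1,u_0]$ the statement is trivial, since $\rho$, $\xi$, $\xi'$ are then bounded and bounded away from $0$ (in particular $\rho(1)=1$), so that $B$ absorbs the discrepancy, and the substantive case is $u\to\infty$. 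Finally, I would remark that this estimate is classical — it goes back to de~Bruijn and is recorded in \cite{GT} — so it may alternatively be quoted; the argument above is included only to keep the exposition self-contained.
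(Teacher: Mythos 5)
The paper does not prove this lemma at all: its entire ``proof'' is a citation of Theorem~8 in Section~III.5.4 of Tenenbaum's book \cite{GT}, with a further attribution to Alladi \cite{Alladi}. What you have written is a reconstruction of the standard saddle-point proof underlying that reference, and the reconstruction is correct in outline. In particular you correctly identify the saddle point: the derivative of $pu+I(-p)$ is $u-(1-\re^{-p})/p$, which vanishes exactly when $\re^{-p}=1-pu$, i.e.\ at $p=-\xi$ by \eqref{alpha}; and the second-derivative computation $I''(\xi)=(u\xi-u+1)/\xi=1/\xi'$ checks out against \eqref{deriv}, so the Gaussian factor $\sqrt{\xi'/(2\pi)}$ emerges as stated. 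Your remark at the end that the result ``may alternatively be quoted'' is in fact exactly what the paper does.

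One small imprecision worth flagging, though it does not affect the conclusion: you say the cubic term ``integrates against the Gaussian to a contribution of relative size $O(1/u)$,'' but oddness actually makes the linear-in-cubic contribution vanish; the $O(1/u)$ relative error arises from the quartic term together with the \emph{square} of the cubic term once one expands $\exp\{\psi(t)+t^2/(2\xi')\}$. With $\xi'\asymp1/u$, effective width $t\asymp u^{-1/2}$, and $|I^{(k)}(\xi)|\asymp u$, both of those scale as $u^{-1}$, so the claimed bound is right even if the mechanism is slightly misdescribed. The tail estimate is, as you acknowledge, the genuine work and is only sketched here; the ingredients you name (decay of $\Re I(\xi-it)$ for moderate $t$, and $\hat\rho(s)\sim 1/s$ for large $|\Im s|$, cf.\ Lemma~\ref{rholap}) are the right ones, but a fully uniform treatment for all $u\ge1$ requires the care that Tenenbaum devotes several pages to. For the purposes of the paper, the citation is the efficient choice; your argument demonstrates that the statement is genuinely a saddle-point estimate on the Laplace transform, which is a useful thing to see spelled out.
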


\textit{Proof}. This is Theorem 8 in Section III.5.4 of \cite{GT}. The result has been proved by K. Alladi \cite{Alladi}.

\begin{lemma}\label{rholap} Let
\[
\hat\rho(s):=\int_0^\infty \re^{-sv} \rho(v) dv=\exp\left\{\gamma+I(-s)\right\}, \quad s \in \C,
\]
be the Laplace transform of $\rho(v)$, $s=-\xi(u)+i\tau=:-\xi+i\tau$ and $\tau\in\R$. Then
\begin{equation*}
\hat{\rho}(s) =\begin{cases}B \exp\left\{I(\xi)-\tau^2u/2\pi^2\right\} & {\rm if}\; |\tau|\leq\pi,  \\
B\exp\left\{ I(\xi)-u/(\pi^2+\xi^2) \right\} & {\rm if}\; |\tau|>\pi  \end{cases}
\end{equation*}
and
\begin{equation*}
 \hat{\rho}(s)=\frac{1}{s}\bigg(1+\frac{B(1+\xi u)}{s}\bigg)\quad {\rm if}\; |\tau|>1+u\xi.
\end{equation*}
\end{lemma}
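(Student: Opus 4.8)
\textit{Proof plan.} The idea is to reduce all three assertions to lower bounds for ${\rm Re}\,I(-s)$. Taking moduli in the displayed formula for $\hat\rho$ gives $|\hat\rho(s)|=\re^\gamma\exp\{{\rm Re}\,I(-s)\}$. Parametrising the integrand of the entire function $I$ along the segment from $0$ to $-s=\xi-i\tau$ by $w=t(\xi-i\tau)$, $t\in[0,1]$, we obtain $I(-s)=\int_0^1 t^{-1}(\re^{t(\xi-i\tau)}-1)\,\rd t$, hence
\[
{\rm Re}\,I(-s)=\int_0^1\frac{\re^{t\xi}\cos(t\tau)-1}{t}\,\rd t=I(\xi)-2J(\tau),\qquad J(\tau):=\int_0^1\frac{\re^{t\xi}\sin^2(t\tau/2)}{t}\,\rd t ,
\]
so $|\hat\rho(s)|=B\,\re^{I(\xi)-2J(\tau)}$ and it remains to bound $J(\tau)$ from below in the two ranges. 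One auxiliary fact is used throughout: integration by parts and $\re^\xi=1+u\xi$ give $\int_0^1 t\,\re^{t\xi}\,\rd t=(\re^\xi(\xi-1)+1)\xi^{-2}=u-(u-1)/\xi$, and rearranging shows $\int_0^1 t\,\re^{t\xi}\,\rd t\ge u/2$ is equivalent to $\re^\xi(2-\xi)\le\xi+2$; the latter holds for $\xi>0$ because $\xi+2-\re^\xi(2-\xi)$ and its first derivative vanish at $0$ while its second derivative equals $\xi\re^\xi>0$. When $|\tau|\le\pi$ we have $t|\tau|/2\le\pi/2$ on $[0,1]$, so $\sin(t\tau/2)\ge(2/\pi)(t|\tau|/2)$ and $J(\tau)\ge(\tau^2/\pi^2)\int_0^1 t\,\re^{t\xi}\,\rd t\ge\tau^2u/(2\pi^2)$; hence $|\hat\rho(s)|\le\re^\gamma\exp\{I(\xi)-\tau^2u/\pi^2\}$, which proves the first case.

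For $|\tau|>\pi$ one replaces $t^{-1}$ by $1$ on $(0,1]$ and evaluates the resulting elementary integral:
\[
2J(\tau)\ge\int_0^1\re^{t\xi}(1-\cos t\tau)\,\rd t=\frac{\re^\xi-1}{\xi}-{\rm Re}\,\frac{\re^{\xi+i\tau}-1}{\xi+i\tau}=\frac{\re^\xi(\xi^2+\tau^2-\xi^2\cos\tau-\xi\tau\sin\tau)-\tau^2}{\xi(\xi^2+\tau^2)} .
\]
Regarded as a quadratic in $\xi$, the bracket $(1-\cos\tau)\xi^2-\tau\sin\tau\cdot\xi+\tau^2$ has discriminant $\tau^2(1-\cos\tau)(\cos\tau-3)\le0$, so for every real $\xi$ it is $\ge\tau^2(3-\cos\tau)/4\ge\tau^2/2$; therefore $2J(\tau)\ge\tau^2(\re^\xi-2)/(2\xi(\xi^2+\tau^2))$. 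For $\xi\ge2$ this is $\ge u/(\pi^2+\xi^2)$: clearing the positive denominators, the inequality becomes $\tau^2A\ge2(\re^\xi-1)\xi^2$ with $A:=(\re^\xi-2)(\pi^2+\xi^2)-2(\re^\xi-1)$, and for $\xi\ge2$ one has $A>0$ together with $2(\re^\xi-1)\xi^2/A<2<\pi^2<\tau^2$, the middle inequality being $(\re^\xi-1)(\xi^2+2)<(\re^\xi-2)(\pi^2+\xi^2)$, i.e. $(\pi^2-2)\re^\xi>\xi^2+2\pi^2-2$, which is plain for $\xi\ge2$. For $0<\xi<2$ the map $\xi\mapsto(\re^\xi-1)/\xi=u$ increases, so $u<(\re^2-1)/2<4$, $u/(\pi^2+\xi^2)<1$, and $2J(\tau)\ge0\ge u/(\pi^2+\xi^2)-1$ trivially. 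In every case $|\hat\rho(s)|\le B\exp\{I(\xi)-u/(\pi^2+\xi^2)\}$ (the bounded additive term absorbed into $B$), which is the second case.

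For $|\tau|>1+u\xi$ I would avoid $J(\tau)$ and use the classical identity $\int_0^z(\re^{-t}-1)t^{-1}\,\rd t=-\gamma-\mathrm{Log}\,z-E_1(z)$ for the principal logarithm and $z\notin(-\infty,0]$, where $E_1(z)=\int_z^\infty\re^{-w}w^{-1}\,\rd w$. Since $I(-s)=\int_0^{-s}(\re^v-1)v^{-1}\,\rd v=\int_0^{s}(\re^{-t}-1)t^{-1}\,\rd t$ and $s\notin(-\infty,0]$ (because $\tau\ne0$), the formula for $\hat\rho$ becomes $\hat\rho(s)=s^{-1}\re^{-E_1(s)}$. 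Integrating $E_1$ along the polygonal path $s\to i\tau\to+\infty+i\tau$, which does not meet the cut, and using $|x+i\tau|\ge|\tau|$ on it,
\[
|E_1(s)|\le\frac1{|\tau|}\int_{-\xi}^0\re^{-x}\,\rd x+\frac1{|\tau|}\int_0^\infty\re^{-x}\,\rd x=\frac{\re^\xi}{|\tau|}=\frac{1+u\xi}{|\tau|}<1 .
\]
Hence $\re^{-E_1(s)}=1+B\,E_1(s)$, and since $|\tau|>1+u\xi>\xi$ gives $|s|<\sqrt2\,|\tau|$, we obtain $\hat\rho(s)=s^{-1}(1+B\,E_1(s))=s^{-1}(1+B(1+u\xi)/s)$, the last assertion.

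The step I expect to be the main obstacle is $|\tau|>\pi$: passing from $J(\tau)$ to $\int_0^1\re^{t\xi}(1-\cos t\tau)\,\rd t$ via $t^{-1}\ge1$ looks far too crude, and the point that needs checking is that it nevertheless produces exactly the constant $1/(\pi^2+\xi^2)$ demanded in the exponent --- that is, that the rational inequality in $(\xi,\tau)$ displayed above really does hold for all $\xi\ge2$, $|\tau|>\pi$, being trivial for $\xi<2$. The remaining ingredients --- the two contour computations, the quadratic-in-$\xi$ bound, and the elementary $E_1$ estimate --- are routine.
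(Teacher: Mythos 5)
Your proof is correct; the paper itself does not prove this lemma but simply cites it as Lemma~8.2 in Section~III.5.4 of Tenenbaum's book, so what you have written is a genuinely self-contained alternative.

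A few remarks on the substance. For $|\tau|\le\pi$ you actually prove the stronger bound $|\hat\rho(s)|\le\re^{\gamma}\exp\{I(\xi)-\tau^2u/\pi^2\}$, since $2J(\tau)\ge\tau^2u/\pi^2$; this of course implies the stated estimate with $\tau^2u/(2\pi^2)$. The auxiliary inequality $\int_0^1 t\,\re^{t\xi}\,\rd t\ge u/2$ via the convexity of $\xi\mapsto\xi+2-\re^{\xi}(2-\xi)$ is clean and correct. For the case $|\tau|>\pi$, the step you worry about --- replacing $t^{-1}$ by $1$ --- does go through: the quadratic-in-$\xi$ lower bound $(1-\cos\tau)\xi^2-\tau\sin\tau\,\xi+\tau^2\ge\tau^2/2$ (the discriminant $\tau^2(1-\cos\tau)(\cos\tau-3)\le0$ is exactly right, and the edge case $\cos\tau=1$ is harmless) yields $2J(\tau)\ge\tau^2(\re^\xi-2)/(2\xi(\xi^2+\tau^2))$, and your elementary reduction of the comparison with $u/(\pi^2+\xi^2)$ to $(\pi^2-2)\re^\xi>\xi^2+2\pi^2-2$ for $\xi\ge2$ is correct; the small-$\xi$ case $0<\xi<2$ contributes only a bounded multiplicative loss, which is absorbed into $B$. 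The third estimate via $\hat\rho(s)=s^{-1}\re^{-E_1(s)}$ and the horizontal-path bound $|E_1(s)|\le\re^\xi/|\tau|=(1+u\xi)/|\tau|<1$ is the tidiest of the three: together with $|s|<\sqrt2\,|\tau|$ it gives exactly the stated expansion. In short, you have replaced a reference by a complete elementary proof, and all the estimates close.

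One cosmetic caution: since $\hat\rho(s)=B\exp\{\dots\}$ in the paper's $B$-notation means $|\hat\rho(s)|\le C\exp\{\dots\}$ with an absolute constant $C$, you should make the constants accrued in your three cases explicit (e.g. $\re^\gamma$, $\re^{\gamma+1}$, $\re\sqrt2$) or at least remark that they are absolute; you do say ``absorbed into $B$'' in the second case, but it is worth stating uniformly.
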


\textit{Proof}. This is Lemma 8.2 in Section III.5.4 of \cite{GT}.

\s

Denote $a\wedge b:=\min\{a, b\}$ and $a\vee b:=\max\{a, b\}$ if $a,b\in \R$. Recall that $x:=x(u)$ is the solution to the saddle point equation and $\lambda(x)=\sum_{j=1}^r jx^j$.

\begin{lemma}\label{2lema} If $u\geq 3$, then
   \begin{equation}
          x=\exp\bigg\{{\log\big( u(r\wedge \log u)\big)\over r}\bigg\}\bigg(1+{B\over r}\bigg).
\label{wedge}
\end{equation}

If $3\leq u\leq \re^r$, then
 \begin{eqnarray}
x&=&\exp\left\{\frac{\log\left(u\log u\right)}{r}\right\}\left(1+\frac{B\log\log u}{r\log u}+\frac{B\log u}{r^2}\right)\nonumber\\
&=&
\exp\Big\{{\xi\over r}\Big\}\left(1+\frac{B\log\log u}{r\log u}+\frac{B\log u}{r^2}\right).
\label{xxi}
\end{eqnarray}

Moreover, for $u>1$,
\begin{equation}
   |\lambda(x)/(r^2u)-1|\leq \log^{-1} u.
\label{lambda}
\end{equation}
\end{lemma}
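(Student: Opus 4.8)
The plan is to extract everything from the defining relation $\sum_{j=1}^r x^j = ur$, i.e.\ $x(x^r-1)/(x-1) = ur$, exploiting that for $u\geq 3$ one has $x>1$, so all terms are genuinely growing. First I would pin down the rough size of $x$. Writing $L := \log x$, the sum $\sum_{j=1}^r x^j$ is dominated by its largest term $x^r$ when $x^r$ is large compared to $r$, and is comparable to $r$ (an arithmetic-type sum) when $x$ is close to $1$; the crossover is governed by whether $rL$ is large or small. Concretely, $x^{r+1}/(x-1) \geq ur \geq x^r$, which already gives $x^r \leq ur \leq x^{r+1}/(x-1)$, hence $r\log x = \log(ur) + O(1)$ in the regime $x-1 \asymp 1$, while a separate elementary estimate handles $x$ near $1$: there $\sum x^j = r + L\binom{r+1}{2} + O(r^3 L^2) = ur$ forces $rL \asymp (u-1)$, so $x^r = \re^{rL} \asymp \re^{u-1}$, and combining the two regimes yields $r\log x = \log\bigl(u(r\wedge\log u)\bigr) + O(1)$. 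Exponentiating gives \eqref{wedge}; the multiplicative error $1+B/r$ comes from the additive $O(1)$ inside the exponent divided through by $r$.

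For the sharper expansion \eqref{xxi} in the range $3\leq u\leq \re^r$, the idea is that here $\log u \leq r$, so $r\wedge\log u = \log u$ and \eqref{wedge} already says $x^r = u\log u \cdot (1+B/r)$, in particular $x^r$ is a slowly growing power of $u$ and $x = 1 + O((\log(u\log u))/r)$. I would then bootstrap: plug the rough value back into $x(x^r-1)/(x-1) = ur$, write $x = \re^{t/r}$ with $t = \log(u\log u) + \delta$, expand $x-1 = t/r + O(t^2/r^2)$ and $x^r - 1 = \re^t - 1 = u\log u\,(1 + o(1))$, and solve for $\delta$. The correction term $(x-1)$ in the denominator produces the factor $r\log x = t$ in $x(x^r-1) = ur(x-1) = ur(t/r)(1+\cdots) = ut(1+\cdots)$, giving $\re^t(1 + B/\re^t) = ut(1 + Bt/r)$, i.e.\ $t = \log u + \log t + B/\re^t + Bt^2/r$; iterating once with $t \approx \log(u\log u)$ turns $\log t$ into $\log\log u + B(\log\log u)/\log u$, which is exactly the first displayed form in \eqref{xxi}. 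The second form follows immediately from Lemma 1, which gives $\xi(u) = \log u + \log\log(u+2) + B(\log\log(u+2))/\log(u+2)$, so $\xi/r$ and $\log(u\log u)/r$ differ by $B(\log\log u)/(r\log u)$, absorbable in the stated error.

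For \eqref{lambda}, note $\lambda(x) = \sum_{j=1}^r j x^j = x\frac{d}{dx}\sum_{j=1}^r x^j$. One clean route: $\lambda(x) = \sum_{j=1}^r j x^j$ and $r\sum_{j=1}^r x^j - \lambda(x) = \sum_{j=1}^r (r-j) x^j \geq 0$, while also $\lambda(x) \leq r\sum x^j = r\cdot ur = r^2 u$, giving the upper bound $\lambda(x)/(r^2u) \leq 1$ for free. For the lower bound I would show $\sum_{j=1}^r (r-j)x^j$ is small relative to $r\cdot ur = r^2 u$: since $x\geq 1$, $\sum_{j=1}^r(r-j)x^j \leq x^r\sum_{j=1}^r(r-j) \leq x^r r^2/2$, and one needs $x^r/(ur) \leq \frac{2}{\log u}$ up to constants; but \eqref{wedge} gives $x^r = u(r\wedge\log u)(1+B/r) \leq u\log u\,(1+B/r)$, hmm this goes the wrong way, so instead I would weight the tail: $\sum_{j=1}^r (r-j)x^j \leq \sum_{j=1}^r (r-j) x^j$ and compare term-by-term with $\sum x^j$, using that the ratio $(r-j)$ is killed by geometric growth of $x^j$ once $x^r \gg r$ — precisely, $\sum_{j}(r-j)x^j \big/ \sum_j x^j \leq r/(x\log x) \cdot$(const) by an Abel-summation/integral comparison, and $x\log x \gtrsim \log u$ in this range by \eqref{xxi}. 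The main obstacle, and the step I expect to be fussiest, is this last estimate \eqref{lambda} uniformly for \emph{all} $u>1$ (including $u$ just above $1$, where $x$ is barely above $1$ and the geometric-domination heuristic fails): there one must instead use the arithmetic-sum asymptotics $\sum(r-j)x^j \approx \binom{r}{2} - \cdots$ versus $r\sum x^j \approx r^2$, and check that the leading $\binom r2$-type terms cancel to the claimed precision $\log^{-1}u$ — so the proof of \eqref{lambda} really splits into the same two regimes ($u$ near $1$ versus $u$ large) as the proof of \eqref{wedge}, and the transition region $u\asymp$ a fixed constant must be handled by crude but uniform bounds.
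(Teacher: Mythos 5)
Your sketch has the right general flavor for \eqref{wedge}--\eqref{xxi} (pin down $r\log x$ to within an additive $O(1)$, then bootstrap once), but the actual ingredients you reach for don't close the argument, and for \eqref{lambda} you correctly sense a gap without resolving it. Two concrete problems.

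First, for \eqref{wedge} your ``two regimes'' are the wrong two regimes. You split according to whether $x$ is near $1$ or not, and in the near-$1$ case you expand $\sum x^j = r + L\binom{r+1}{2}+O(r^3L^2)$ to conclude $rL\asymp u-1$, hence $x^r\asymp \re^{u-1}$. But for $u$ of moderate size (say $u=10$) that quadratic Taylor expansion requires $Lr\ll 1$, i.e.\ $u-1\ll 1$, which is excluded by $u\geq 3$; and the conclusion $x^r\asymp\re^{u-1}$ contradicts what you're trying to prove ($x^r\asymp u\log u$) already at $u=10$. The dichotomy that actually matters is $r\lessgtr\log u$, and the paper gets both cases out of two sharper tools you never introduce: the AM--GM bound $u^{1/r}\le x\le u^{2/(r+1)}$, and the identity $x^r=1+ru(1-x^{-1})$. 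Plugging the AM--GM bounds for $x^{-1}$ into the identity and using $1-\re^{-t}\geq t\re^{-t}$ gives $x^r\asymp u\log u$ when $r\geq\log u$ and $x^r\asymp ru$ when $r\leq\log u$, with no Taylor expansion and no regime where $x$ is ``near $1$.'' Your bootstrap for \eqref{xxi} is in the same spirit as the paper's single iteration, so that part is fine in principle, but it inherits the faulty base step.

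Second, for \eqref{lambda} you explicitly note that your bound on $\sum_{j\le r}(r-j)x^j$ ``goes the wrong way,'' and your fallback Abel-summation bound $\sum_j(r-j)x^j \lesssim x^{r+1}/(x-1)^2$ still leaves a factor $x/(x-1)$ unaccounted for; you also worry, correctly, that \eqref{lambda} is claimed for all $u>1$ while your estimates for $x$ only kick in at $u\geq 3$. The piece you are missing is that $\lambda(x)$ has a closed form. Summing the geometric-derivative series and using $\sum_{j\le r}x^j=ur$ gives
\[
\lambda(x)=\frac{rx^{r+1}-ru}{x-1}=r^2u+\frac{r(x-u)}{x-1},
\]
so that
\[
\Big|\frac{\lambda(x)}{r^2u}-1\Big|=\frac{1}{ru}\cdot\frac{u-x}{x-1}<\frac{1}{r(x-1)}\le\frac{1}{\log u},
\]
the last step using $x\ge u^{1/r}$, hence $r(x-1)\ge r(\re^{(\log u)/r}-1)\ge\log u$. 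This is uniform in $u>1$ and avoids the regime split entirely; without that identity your approach to \eqref{lambda} does not go through.
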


   \textit{Proof}.  By definition,  $x>1$ and  $u\leq x^r\leq ru$ for $u>1$. The well-known property of geometric
   and arithmetic means
\[
    x^{(r+1)/2}=(x^1 x^2\cdots x^r)^{1/r}\leq {1\over r}\sum_{j=1}^r x^j=u
    \]
yields
\begin{equation}
              u^{1/r}\leq x\leq u^{2/(r+1)}\leq u.
\label{urx}
\end{equation}
We have from the definition that
  \begin{equation}
   x^r=1+ru(1- x^{-1}).
\label{xr}
\end{equation}
Consequently, by (\ref{urx}) and by virtue of $1-\re^{-t}\geq t\re^{-t}$ if $t\geq 0$,
\[
   x^r> ru\big(1-\exp\{-(\log u)/r\}\big)\geq u\log u \exp\{-(\log u)/r\}\geq {\rm e}^{-1} u\log u
\]
 provided that $r\geq \log u$. Similarly,
  \[
   x^r\leq 1+ru(1-\exp\{-2(\log u)/r\})\leq 1+2u\log u.
   \]
   The last two inequalities imply
   \begin{equation}
            r\log x =\log (u\log u)+ B
   \label{rlog}
   \end{equation}
   for $r\geq \log u$.

  If $r\leq  \log u$, we  have
 \[
   x^r> ru\big(1-\exp\{-(\log u)/r\}\big)\geq \big(1-\re^{-1}\big) ru.
\]
and $x^r\leq 1+ru$. Now
\[
            r\log x =\log (ur)+ B.
   \]
      The latter and (\ref{rlog}) lead to relation (\ref{wedge}).

 To sharpen (\ref{wedge}) for $3\leq u\leq \re^r$, we  iterate once more and obtain
\begin{eqnarray*}
  r\log x &=&\log \Big[1+ru\big(1-x^{-1}\big)\Big]\\
   &=&
     \log\bigg[1+ r  u\bigg(1-\exp\bigg\{{-\log( u\log u)\over r}\bigg\}\Big(1+{B\over r}\Big)\bigg)\bigg]\\
     &=&
         \log\Big( u\log( u\log u)+Bu+B(u/r)\log^2 u\Big)\\
     &=&
     \log( u\log u)+{B\log\log u\over \log u}+{B\log u\over r}.
         \end{eqnarray*}
 This is the first relation in (\ref{xxi}).    Comparing the result and Lemma \ref{1lema}, we have the second one.

To prove (\ref{lambda}), we first observe that
\begin{equation}
\lambda(x)=\frac{rx^{r+1}}{x-1}-\frac{x^{r+1}-x}{(x-1)^2}=\frac{rx^{r+1}-ru}{x-1}= r^2u+\frac{r(x-u)}{x-1}.
\label{Lambda}
\end{equation}
Further,
\[
0\leq \frac{1}{ru} \frac{u-x}{x-1}<\frac{1}{r(x-1)}\leq \frac{1}{\log u},
\]
due to  (\ref{urx}) and  $r(x-1)\geq r(\re^{(\log u)/r}-1)\geq \log u$.

    The lemma is proved.

\s

Using properties of differentiable functions, we improve the  remainder term estimates.

\begin{lemma}\label{4lema}
If $1< u\leq \re^r$, $\xi:=\xi(u)$, and $\xi':=\xi'(u)$, then
 \begin{equation}
x=x(u)= \exp\Big\{\frac{\xi}{r}\Big\}+\frac{B\log (u+1)}{r^2}
\label{xxii}
\end{equation}
and
\begin{equation}
\frac{x'}{x}(u)= \frac{\xi'}{r}\Big(1+\frac{B \log (u+1)}{r}\Big).
\label{xxiii}
\end{equation}
\end{lemma}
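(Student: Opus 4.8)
\textit{Proof proposal.}
I would deduce \emph{both} assertions from the single estimate
\[
r\log x(u)-\xi(u)=\frac{B\log(u+1)}{r},\qquad 1<u\le\re^r,
\]
and then differentiate. Put $\phi=\phi(u):=r\log x(u)$; by (\ref{urx}) one has $\log u\le\phi\le 2\log u\le 2r$, so $\phi/r\in(0,2]$. Introduce $h(w):=(\re^w-1)/w$ with $h(0)=1$, so that equation (\ref{alpha}) reads $h(\xi)=u$, while rewriting the saddle point equation (\ref{saddle}) as $x(x^r-1)/(x-1)=ur$ and substituting $x=\re^{\phi/r}$ turns it into $F(\phi)=u$, where
\[
F(t):=\frac{\re^{t/r}(\re^t-1)}{r(\re^{t/r}-1)}=h(t)\,G\!\Big(\frac tr\Big),\qquad G(w):=\frac{w\re^w}{\re^w-1}.
\]
Everything needed about $h$ follows from $h^{(k)}(w)=\int_0^1 s^k\re^{ws}\,\rd s$: $h$ is positive, increasing and convex, $h'(w)\ge\tfrac12 h(w)$, and $g:=h'/h$ satisfies $\tfrac12<g(w)<1$ and is Lipschitz on $(0,\infty)$ with an absolute constant (indeed $g'(w)=w^{-2}-(4\sinh^2(w/2))^{-1}$, which is in $(0,B]$ because $\sinh t>t$). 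Likewise $G$ is increasing, $G(w)\ge1$, $G(w)-1=Bw$ and $(\log G)'(w)=w^{-1}-(\re^w-1)^{-1}=B$ on $[0,2]$.

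\textit{Step 1: the key inequality.}
From $F(\phi)=h(\phi)G(\phi/r)$ and $G\ge1$ we get $h(\phi)\le u=h(\xi)$, hence $\phi\le\xi$ since $h$ is increasing; also $h(\phi)=u/G(\phi/r)\ge u/G(2)$. Then
\[
0\le u-h(\phi)=h(\phi)\big(G(\phi/r)-1\big)=B\,u\,\frac{\phi}{r},
\]
using $h(\phi)\le u$ and $G(\phi/r)-1=B\phi/r$. By the mean value theorem $u-h(\phi)=h'(\eta)(\xi-\phi)$ for some $\eta\in[\phi,\xi]$, and $h'(\eta)\ge h'(\phi)\ge\tfrac12 h(\phi)\ge cu$ with $c=1/(2G(2))$. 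Dividing gives $\xi-\phi=B\phi/r=B\log(u+1)/r$, since $\phi\le 2\log u\le 2\log(u+1)$. Consequently
\[
x=\re^{\phi/r}=\re^{\xi/r}\re^{(\phi-\xi)/r}=\re^{\xi/r}\Big(1+\frac{B\log(u+1)}{r^2}\Big),
\]
and because $\re^{\xi/r}\le u^{2/r}\le\re^2$ by Lemma \ref{1lema} and (\ref{urx}), this is exactly (\ref{xxii}).

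\textit{Step 2: the derivative.}
Since $\phi=r\log x$ is positive and differentiable, $x'/x=\phi'/r$. Differentiating $h(\xi(u))=u$ and $F(\phi(u))=u$ yields $\xi'=1/h'(\xi)$ and $\phi'=1/F'(\phi)$ (note $F'(\phi)=F(\phi)\big(g(\phi)+\tfrac1r(\log G)'(\phi/r)\big)>0$), so
\[
\frac{x'/x}{\xi'/r}=\frac{\phi'}{\xi'}=\frac{h'(\xi)}{F'(\phi)}.
\]
Now $F'(t)/F(t)=g(t)+\tfrac1r(\log G)'(t/r)=g(t)+B/r$ for $t=\phi$ (as $\phi/r\le2$), and $h(\xi)=F(\phi)=u$, whence $h'(\xi)=u\,g(\xi)$ and $F'(\phi)=u\big(g(\phi)+B/r\big)$. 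Therefore
\[
\frac{h'(\xi)}{F'(\phi)}=\frac{g(\xi)}{g(\phi)+B/r}=1+\frac{\big(g(\xi)-g(\phi)\big)+B/r}{g(\phi)+B/r}=1+\frac{B\log(u+1)}{r},
\]
the last step because $g(\phi)>\tfrac12$, the $B/r$ terms are $\le B\log(u+1)/r$ (as $\log(u+1)\ge\log 2$), and $|g(\xi)-g(\phi)|\le B|\xi-\phi|\le B\log(u+1)/r$ by Step 1 and the Lipschitz bound on $g$. This is (\ref{xxiii}).

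\textit{Main obstacle.}
The delicate point is uniformity. Since $u$ may be as large as $\re^r$, the arguments $\xi$ and $\phi$ range over the \emph{unbounded} interval $(0,2r]$, so the elementary bounds on $h$, $g$ and $G$ must hold with absolute constants over the whole half-line rather than on some fixed compact set; the integral formula for $h^{(k)}$ and the closed form $g'(w)=w^{-2}-(4\sinh^2(w/2))^{-1}$ are what make this possible. One must also check that the single error $B\log(u+1)/r$ absorbs both limiting regimes: $u\to1^{+}$, where $\xi,\phi\to0$ while $\log(u+1)$ stays bounded away from $0$ (so the bound is in fact generous), and $u\asymp\re^r$, where $\phi/r\asymp1$ and $\log(u+1)\asymp r$, so that the $O(\phi/r)$ arising in Step 1 is genuinely of the size $B\log(u+1)/r$ and not smaller. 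Once these uniform estimates are in hand the two displays above are routine.
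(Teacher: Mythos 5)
Your proof is correct, and it takes a genuinely different route from the paper's. The paper proves (\ref{xxii}) by introducing an auxiliary argument $w$ with $x(w)=\re^{\xi(u)/r}$, applying the mean value theorem to the function $v\mapsto x(v)$ to write $x(u)-x(w)=B(u-w)x'(v)$, and then bounding $u-w$ and $x'$ separately (with distinct treatments of $1<u\le 3$ and $u\ge 3$); for (\ref{xxiii}) it introduces yet another auxiliary parameter $y$ with $x(u)=\re^{\xi(y)/r}$, shows $\xi(u)=\xi(y)(1+B/r)$, and manipulates an identity for $x'/x$. You instead observe that $\xi$ and $\phi:=r\log x$ solve the \emph{same} type of equation — $h(\xi)=u$ and $F(\phi)=u$ with $F=h\cdot G(\cdot/r)$ a small multiplicative perturbation of $h$ on $(0,2r]$ — and derive both estimates from the single comparison $F\approx h$, using monotonicity and convexity of $h$, the bounds $\tfrac12<g=h'/h<1$, the Lipschitz bound on $g$ (via the closed form for $g'$), and $G-1=O(w)$. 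This is more unified: (\ref{xxiii}) falls out of (\ref{xxii}) by differentiating the same two equations, with no new auxiliary variable; the source of the error $B\log(u+1)/r$ is visible (it is exactly $G(\phi/r)-1$); and no case split on $u$ is needed. The paper's argument is more ad hoc but leans on machinery (the bound (\ref{xderiv}) on $x'$) already developed for other purposes. One small point worth making explicit in your write-up: the denominator $g(\phi)+\tfrac1r(\log G)'(\phi/r)$ is bounded below by $g(\phi)>\tfrac12$ because $(\log G)'(w)=w^{-1}-(\re^w-1)^{-1}>0$, so no smallness of $1/r$ is needed there — this keeps the estimate valid down to $r=2$, matching the paper.
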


   \textit{Proof}.  One may skip the  trivial case when  $r$ is bounded.
   From (\ref{saddle}) and (\ref{lambda}),  for the differentiable function $x(v)$, we have
   \begin{equation}
   0<x'(v)=\frac{rx(v)}{\lambda(x(v))}\leq \frac{x(v)}{rv\big(1-\log^{-1} 3\big)}=\frac{Bx(v)}{rv}
   \label{xderiv}
   \end{equation}
if $v\geq 3$. The same holds if $1\leq v\leq 3$. Indeed, in this case it suffices to apply the trivial estimate
$
   \lambda(x(v))\geq  r^2/2\geq r^2 v/6.
$

  As a function of $v$,  $\exp\big\{\xi(v)/r\big\}$ is also strictly increasing; therefore, given any  $u\geq 1$ and the
  value $\xi=\xi(u)$, we can find $w\geq1$ such that
   \[
   x(w)=\exp\big\{\xi/r\big\}.
   \]
Now
\begin{equation}
x-\exp\big\{\xi/r\big\}=x(u)-x(w)= B(u-w) x'(v),
\label{xu=}
\end{equation}
 where $v$ is a point between the $u$ and $w$, irrespective of their relative position on the real line.

  Using  (\ref{xr}) with  $w$ instead of $u$, we have
  \[
    x^r(w)-1=  \re^\xi-1= rw\big(1-x(w)^{-1}\big)= rw\big(1-\re^{-\xi/r}\big).
    \]
By the definition of $\xi$ and Lemma 1, we obtain from the last relation that $u\xi= rw\big(\xi/r+ B(\xi/r)^2\big)$ with $|B|\leq 1/2$. Hence
 \begin{equation}
     |u-w|\leq w\xi/(2r).
 \label{uminv}
 \end{equation}

 If $u\leq 3$ and $r\geq 1$, then $0.09w<w(1-\xi(3))\leq 2u\leq 6$ and $u-w =Br^{-1}$. Therefore,  estimates (\ref{xderiv}) and (\ref{xu=}) imply
 \[
 x-\exp\big\{\xi/r\big\}=B r^{-2},
 \]
 as desired in (\ref{xxii}).

 If $u\geq 3$, then by virtue of $\xi\sim\log u$ as $u\to\infty$ and $\log u\leq r$, we obtain from (\ref{uminv}) that $|u-w|\leq (3/4)w$ if $r$ is
  sufficiently large. Hence $(4/7) u\leq w\leq 4u$ and $(4/7)u\leq v\leq 4 u$. By Lemma \ref{2lema}, this gives
   $x(v)\leq x(4u)=B$. Formula (\ref{xxii}) again follows from (\ref{xderiv}) and (\ref{xu=}).

 To derive approximation (\ref{xxiii}) of the logarithmic derivative, we use similar arguments. First,
  given   $u\geq 3$,  we define  $y>1$ such that
   $x=\re^{\xi(y)/r}$ and claim that
\begin{equation}
     \xi=\xi(y)\big(1+B/r\big).
     \label{y}
     \end{equation}
     Indeed, if also $u\leq \re^r$, then an observation in the proof of Lemma \ref{2lema} gives us $\xi(y)=r\log x\leq \log (ur)
     \leq (6/5) r$ if $r$ is sufficiently large.
By the definitions and inequalities
\[
  0<\frac{t}{1-\re^{-t}}-1=\frac{t-1+\re^{-t}}{1-\re^{-t}}\leq \frac{t^2/2}{t-t^2/2}\leq \frac{3t}{2}
\]
if $0<t\leq 6/5$, we further obtain
     \begin{equation}
  u=\frac{x}{r} \frac{x^r-1}{x-1}=
     \frac{\re^{\xi(y)}-1} {\xi(y)} \frac {\xi(y)/r}{1-\re^{-\xi(y)/r}}=     y\Big(1+\frac{B\xi(y)}{r}\Big)
     \label{u}
     \end{equation}
     with $0<B\leq 3/2$.
      Hence     $ 15/14\leq (5/14)u< y\leq u $
       and also $ \xi'(v)=B/v=B/y$ for all $v\in[y, u]$, by Lemma \ref{1lema}. Inserting this and (\ref{u})
      into   $ \xi-\xi(y)=(u-y)\xi'(v) $ with some $v\in[y, u]$,  we complete the proof of (\ref{y}).

      Let us keep in mind the bound $y\geq 15/14$ and  return to the logarithmic derivative. It follows from (\ref{Lambda}) and (\ref{xr}) that
\[
\frac{x'}{x}\bigg(\frac{x^r}{x^r-1}-\frac{1}{r(x-1)}\bigg)= \frac{1}{ru}.
 \]
Now, the idea is to rewrite  the quantity in large parentheses via $\xi(y)$, then use inequality (\ref{y}) to approximate it by $\xi$ and $\xi'$.

The inequality $0<t^{-1}-(\re^t-1)^{-1}<1$ applied with $t=\xi(y)/r$ gives $(r(x-1))^{-1}=1/\xi(y)+B/r$; therefore,
\begin{equation}
\frac{x'}{x}  \bigg(\frac{1+y\xi(y)-y}{y\xi(y)}+\frac{B}{r}\bigg)= \frac{1}{ru}.
 \label{logder}
 \end{equation}
 Because of (\ref{deriv}), the first ratio inside the parentheses is $1/(y\xi'(y))$ which, by Lemma \ref{1lema},
 satisfies an inequality
 \[
      \frac{1}{y\xi'(y)}\geq \frac{y\log y-y+1}{y\log y}=:q(y)\geq q\Big(\frac{15}{14}\Big)>0.
 \]
Now using (\ref{u}) and (\ref{y}), we obtain
\[
\frac{x'}{x} = \frac{1}{ru} \frac{y\xi(y)}{1+y\xi(y)-y}\Big(1+\frac{B}{r}\Big)= \frac{1}{ru} \frac{u\xi}{1+u\xi-u}\Big(1+\frac{B\log u}{r}\Big)=
 \frac{\xi'}{r}\Big(1+\frac{B\log u}{r}\Big)
 \]
 if $3\leq u\leq \re^r$.

 In the case $1<u\leq 3$, we have from (\ref{xxii})
 \begin{eqnarray*}
 \lambda(x)&=&\sum_{j=1}^r j\bigg(\re^{\xi/r}+\frac{B}{r^2}\bigg)^j=
 r\sum_{j=1}^r \frac{j}{r}\re^{\xi j/r}+Br=r^2\int_0^1t\re^{t\xi} dt+Br\\
 &=&
 \frac{r^2}{\xi}(u\xi+1-u)+Br= \frac{r^2}{\xi'}+B r.
 \end{eqnarray*}
 Hence
 \[
    \frac{x'}{x}=\frac{r}{\lambda(x)}=\frac{\xi'}{r}\Big(1+\frac{B}{r}\Big).
    \]

The lemma is proved.

We will need an estimate of the following function
\[
T(z):=\int_0^{z}\frac{\re^t-1}{t}\left(\frac{t}{r}\frac{\re^{t/r}}{\re^{t/r}-1}-1\right) dt, \quad z\in \C.
\]

  \begin{lemma}\label{6lema} If $z=\eta+i\tau$, $0\leq \eta\leq \pi r$ and $-\pi r\leq \tau\leq \pi r$, then
\[
\Big|T(z)+\frac{z}{2r}\Big|\leq \frac{4\re^\eta}{r}+\frac{\tau^2}{12r^2}.
\]
\end{lemma}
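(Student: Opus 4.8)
The plan is to reduce everything to bounds for the single function
\[
   G(w):=\frac{\re^w}{\re^w-1}-\frac1w ,
\]
which is holomorphic in the strip $|{\rm Im}\,w|<2\pi$ (the apparent pole at $w=0$ cancels) and has there the Taylor expansion $G(w)=\tfrac12+\sum_{k\ge1}\tfrac{B_{2k}}{(2k)!}\,w^{2k-1}$ about the origin, with $B_{2k}$ the Bernoulli numbers; thus $G(0)=\tfrac12$ and $G(w)-\tfrac12=w\,G_2(w)$, where $G_2(w)=\sum_{k\ge1}\tfrac{B_{2k}}{(2k)!}\,w^{2(k-1)}=\tfrac1{12}+\cdots$ is also holomorphic in that strip. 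The starting identity, got by pulling $t/r$ out of the inner bracket, is
\[
   \frac{\re^t-1}{t}\Bigl(\frac tr\,\frac{\re^{t/r}}{\re^{t/r}-1}-1\Bigr)=\frac{(\re^t-1)\,G(t/r)}{r}.
\]
Since the right side is holomorphic in $t$ on $\{|{\rm Im}\,t|<2\pi r\}$, the integral defining $T(z)$ is path-independent there, and I integrate along the segment $[0,z]$; on it ${\rm Re}\,t\in[0,\eta]$, so $|\re^t|\le\re^\eta$, and $t/r$ stays in the rectangle $\{|{\rm Re}\,w|\le\pi,\ |{\rm Im}\,w|\le\pi\}$, where $|w|\le\pi\sqrt2<2\pi$.

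Next I extract the main term. Writing $G=\tfrac12+(G-\tfrac12)$ and using $\tfrac1{2r}\int_0^z(\re^t-1)\,dt+\tfrac z{2r}=\tfrac{\re^z-1}{2r}$,
\[
   T(z)+\frac z{2r}=\frac{\re^z-1}{2r}+\frac1r\int_0^z\re^t\bigl(G(t/r)-\tfrac12\bigr)\,dt-\frac1r\int_0^z\bigl(G(t/r)-\tfrac12\bigr)\,dt .
\]
The middle integral is the heart of the matter: estimated by absolute values it is only of size $\re^\eta$, with no decay in $r$, so I integrate by parts, moving a derivative onto $G-\tfrac12$. The boundary term at $0$ vanishes because $G(0)=\tfrac12$, and the surviving integral gains a factor $1/r$. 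This is the step that exploits the cancellation inside $\re^t$; without it a direct bound for $T(z)$ is hopeless. In the last integral I use $G-\tfrac12=w\,G_2(w)$. The outcome is
\[
   T(z)+\frac z{2r}=\frac{\re^z-1}{2r}+\frac{\re^z\bigl(G(z/r)-\tfrac12\bigr)}{r}-\frac1{r^2}\int_0^z\re^t\,G'(t/r)\,dt-\frac1{r^2}\int_0^z t\,G_2(t/r)\,dt .
\]

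Finally I estimate the four pieces. From the Taylor series the triangle inequality gives explicit constants with
\[
   |G(w)-\tfrac12|\le A_0,\qquad |G'(w)|\le A_1,\qquad |G_2(w)|\le A_2\qquad(|w|\le\pi\sqrt2);
\]
using $\tfrac{|B_{2k}|}{(2k)!}=\tfrac{2\zeta(2k)}{(2\pi)^{2k}}$ together with the classical identity $\sum_{k\ge1}\zeta(2k)y^{2k}=\tfrac12(1-\pi y\cot\pi y)$, a short computation gives $A_2\le\tfrac16$, $A_0=\pi\sqrt2\,A_2<\tfrac23$, and $A_1<\tfrac25$. Combining these with $\re^\eta\ge1$, $|\re^t|\le\re^\eta$ on $[0,z]$, $\int_0^z|dt|=|z|\le\pi\sqrt2\,r$ and $\int_0^z|t|\,|dt|=\tfrac12|z|^2=\tfrac12(\eta^2+\tau^2)$ yields
\[
   \Bigl|T(z)+\frac z{2r}\Bigr|\le\frac{\bigl(1+A_0+\pi\sqrt2\,A_1\bigr)\re^\eta}{r}+\frac{A_2(\eta^2+\tau^2)}{2r^2}.
\]
Since $\eta^2\le 4\re^{-2}\re^\eta<\re^\eta$ and $r\ge1$, the term $A_2\eta^2/(2r^2)$ is absorbed into $\re^\eta/r$ (adding $A_2/2\le\tfrac1{12}$ to the coefficient), while $A_2\tau^2/(2r^2)\le\tau^2/(12r^2)$; as $1+A_0+\pi\sqrt2\,A_1+A_2/2<4$, the bound of the lemma follows. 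The one genuinely delicate point is that the coefficient of $\tau^2/r^2$ must come out as exactly $1/12$, which forces $\sup_{|w|\le\pi\sqrt2}|G_2(w)|\le\tfrac16$; the true value being $\approx 0.136$, this near-equality has to be verified with some care rather than by a crude majorant.
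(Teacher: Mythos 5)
Your argument is correct and is essentially the paper's own proof, just repackaged: the paper plugs in the Bernoulli/$\zeta$ series for $z/(1-\re^{-z})$ and integrates term-by-term with one integration by parts per term, whereas you carry out the same integration by parts once on the function $G(w)=\re^w/(\re^w-1)-1/w$ whose Taylor coefficients are those same Bernoulli/$\zeta$ data. One remark on your ``delicate'' point: the crude majorant $\zeta(2k)\leq\pi^2/6$ together with $q/(1-q)\leq 2q$ for $q\leq 1/2$ already gives $A_2\leq 1/6$ exactly (the paper uses precisely this device), though you are right that you need the slightly sharper values of $A_0,A_1$ (the crude majorants alone would give a total coefficient just above $4$), so the appeal to $\sum_{k\ge1}\zeta(2k)y^{2k}=\tfrac12(1-\pi y\cot\pi y)$ is doing real work.
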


\textit{Proof}. The well known  theory of Bernoulli numbers $\{b_n\}$, $n\geq 0$, gives us the series
\begin{equation}
 b(z):=    \frac{z}{1-\re^{-z}}=\sum_{n=0}^\infty \frac{b_n(-z)^n}{n!}=1+\frac{z}{2}+2\sum_{k=1}^\infty \frac{(-1)^{k+1}\zeta(2k)}{(2\pi)^{2k}} z^{2k}
 \label{Bern}
 \end{equation}
 converging for $z\in\C$. Here $\zeta(2k)=\sum_{m\geq 1}m^{-2k}\leq \zeta(2)=\pi^2/6$. Hence
\begin{eqnarray}
T(z)&=&\frac{1}{2r}\int_0^{z} \left(\re^{t}-1\right) dt+2\sum_{k=1}^\infty \frac{(-1)^{k+1}\zeta(2k)}{(2\pi r)^{2k}}\int_0^{z}(\re^t-1)t^{2k-1}dt\nonumber\\
&=&\frac{1}{2r}(\re^{z}-z-1)+2\sum_{k=1}^\infty \frac{(-1)^{k+1}\zeta(2k)}{(2\pi r)^{2k}}\left( \re^{z}z^{2k-1}-(2k-1)\int_0^{z}\re^t t^{2k-2}dt\right)
\nonumber\\
&&\quad +2\sum_{k=1}^\infty \frac{(-1)^{k}\zeta(2k)z^{2k}}{2k(2\pi r)^{2k}}.
\label{Ts}
\end{eqnarray}

Under assumed conditions, $|z|^2\leq 2\pi^2 r^2$; therefore, summing up the series, we obtain
\begin{eqnarray*}
\Big|T(z)+\frac{z}{2r}\Big|&\leq&
\frac{\re^\eta}{r}+\frac{2\pi^2}{3}\re^\eta\sum_{k=1}^\infty\frac{|z|^{2k-1}}{(2\pi r)^{2k}}+\frac{\pi^2}{6}\sum_{k=1}^\infty\frac{|s|^{2k}}{k(2\pi r)^{2k}}\nonumber\\
&\leq&
\frac{\re^\eta}{r}+\frac{\re^{\eta}(\eta+|\tau|)}{3 r^2}+\frac{\eta^2+\tau^2}{12r^2}
\leq \frac{\re^\eta}{r}\Big(1+\frac{2\pi}{3}+\frac{\pi}{12}\Big)+\frac{\tau^2}{12r^2}\\
&\leq&
 \frac{4\re^\eta}{r}+\frac{\tau^2}{12r^2}.
\end{eqnarray*}

The lemma is proved.

\section{ Proof of Theorem 2}

The non-standard part of our proof concerns the following trigonometric sum
\[
g_r(t, y):=\sum_{j\leq r}\frac{y^j(\re^{itj}-1)}{j},\quad t\in (-\pi, \pi], \; y>1.
\]
 Its behavior  outside a vicinity of the point $t=0$ is rather complicated; therefore, we consider it in a separate lemma.
Denote
\[
\lambda_k:=\sum_{j=1}^rj^{k-1}x^j,\ k\geq 1,
\]
where $x=x(n/r)$. In particular, $\lambda_1=ur$ and  $\lambda_2=\lambda(x)$.

\begin{lemma}
\label{7lema} If $t\in [-\pi,\pi]$ and $y>1$, then
\begin{equation}
\Re g_r(t,y)\leq -\frac{2}{\pi^2}\frac{y^{r+1}}{r(y-1)}\frac{t^2}{(y-1)^2+t^2}+\frac{2y}{r(y-1)}.
\label{grt}
\end{equation}

If  $1/r\leq |t|\leq \pi$, $x=x(u)$, and $u:=n/r\geq 3$, then
\begin{equation}
\Re g_r(t):=\Re g_r(t,x)\leq -\frac{1}{4\pi^2} \frac{u^{1-4/(r+1)}}{\log^2 u}+\frac{2}{r}+\frac{2}{\log u}.
\label{grt1}
\end{equation}
 \end{lemma}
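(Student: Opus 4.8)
\emph{Plan of proof.} The plan is to first note that, since $\Re(\re^{itj}-1)=\cos(tj)-1$, we have $\Re g_r(t,y)=-S$ with $S:=\sum_{j=1}^r j^{-1}y^j(1-\cos tj)\ge 0$. As every term is nonnegative and $1\le j\le r$, the crude bound $1/j\ge 1/r$ gives $S\ge r^{-1}\sum_{j=1}^r y^j(1-\cos tj)$. Writing $1-\cos(tj)=1-\Re\re^{itj}$, summing the two geometric progressions, and then using $\Re z\ge-|z|$ together with $|y\re^{it}-1|\ge y-1$, one gets
\[
\sum_{j=1}^r y^j(1-\cos tj)=\frac{y^{r+1}-y}{y-1}-\Re\frac{(y\re^{it})^{r+1}-y\re^{it}}{y\re^{it}-1}\ge y^{r+1}\Big(\frac1{y-1}-\frac1{|y\re^{it}-1|}\Big)-\frac{2y}{y-1},
\]
hence $\Re g_r(t,y)\le -r^{-1}y^{r+1}\big(\tfrac1{y-1}-\tfrac1{|y\re^{it}-1|}\big)+\tfrac{2y}{r(y-1)}$. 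Next I would estimate the bracket from below: since $|y\re^{it}-1|^2=(y-1)^2+2y(1-\cos t)$,
\[
\frac1{y-1}-\frac1{|y\re^{it}-1|}=\frac{2y(1-\cos t)}{(y-1)\,|y\re^{it}-1|\,\big(|y\re^{it}-1|+(y-1)\big)}\ge\frac{y(1-\cos t)}{(y-1)\big((y-1)^2+2y(1-\cos t)\big)},
\]
and then the elementary facts $2t^2/\pi^2\le 1-\cos t\le t^2/2$ for $|t|\le\pi$, combined with $4/\pi^2<1$, give the termwise inequality $y(1-\cos t)\big((y-1)^2+t^2\big)\ge\tfrac{2t^2}{\pi^2}\big((y-1)^2+2y(1-\cos t)\big)$, i.e. $\tfrac1{y-1}-\tfrac1{|y\re^{it}-1|}\ge\tfrac{2}{\pi^2}\,t^2\big/\big((y-1)((y-1)^2+t^2)\big)$. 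Substituting this proves (\ref{grt}).

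To deduce (\ref{grt1}) I would set $y=x=x(u)$ with $u=n/r\ge 3$ in (\ref{grt}) and bring in the saddle equation. From (\ref{xr}) we get $x^{r+1}=x+ru(x-1)$, so $\tfrac{x^{r+1}}{r(x-1)}=u+\tfrac{x}{r(x-1)}\ge u$ and $\tfrac{x^{r+1}}{(x-1)^3}\ge\tfrac{ru}{(x-1)^2}$; moreover the two-sided bound $u^{1/r}\le x\le u^{2/(r+1)}$ of (\ref{urx}) yields $r(x-1)\ge\log u$ (so the remainder is $\tfrac{2x}{r(x-1)}=\tfrac2r+\tfrac2{r(x-1)}\le\tfrac2r+\tfrac2{\log u}$) and, via $1-u^{-2/(r+1)}\le\tfrac{2\log u}{r+1}$, the upper estimate $r(x-1)\le 2u^{2/(r+1)}\log u$. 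For the main negative term in (\ref{grt}) I would split on the size of $|t|$ against $x-1$. If $(x-1)^2\le t^2$, then $t^2/((x-1)^2+t^2)\ge 1/2$, so that term is at least $\pi^{-2}\tfrac{x^{r+1}}{r(x-1)}\ge u/\pi^2$, which exceeds $\tfrac1{4\pi^2}u^{1-4/(r+1)}\log^{-2}u$ since $u\ge 3$. If $(x-1)^2>t^2$, then $|t|\ge 1/r$ gives $t^2/((x-1)^2+t^2)\ge 1/(2r^2(x-1)^2)$, so the term is at least $\tfrac1{\pi^2 r^3}\tfrac{x^{r+1}}{(x-1)^3}\ge\tfrac{u}{\pi^2 r^2(x-1)^2}\ge\tfrac{u^{1-4/(r+1)}}{4\pi^2\log^2 u}$ by the upper bound on $r(x-1)$. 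Collecting the pieces yields (\ref{grt1}).

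Part (\ref{grt}) is essentially mechanical once the two geometric sums have been evaluated and $4/\pi^2<1$ has been invoked. The delicate point, and the one I expect to require genuine care, is the second case of (\ref{grt1}): one must keep the factor $x^{r+1}$ rather than discard it, convert $x^{r+1}/(x-1)^3$ into $ru/(x-1)^2$ through $x^{r+1}=x+ru(x-1)$, and then squeeze $r(x-1)$ between $\log u$ and $2u^{2/(r+1)}\log u$ using the sharp estimates (\ref{urx}); it is exactly this manoeuvre that produces the exponent $1-4/(r+1)$ and the power $\log^2u$, and one should check that the absolute constants come out good enough already for $u\ge 3$.
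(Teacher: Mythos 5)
Your proposal is correct and follows essentially the same route as the paper: drop $1/j$ to $1/r$, evaluate the two geometric sums, bound $|y\re^{it}-1|$ from below using $1-\cos t\ge 2t^2/\pi^2$, and for the second part combine \eqref{grt} with the saddle identity $x^{r+1}=x+ru(x-1)$ and the bounds $\log u\le r(x-1)\le 2u^{2/(r+1)}\log u$ coming from \eqref{urx}. The only cosmetic deviation is that you split the range $1/r\le|t|\le\pi$ into two cases according to whether $|t|\gtrless x-1$, whereas the paper simply observes that $t^2/((x-1)^2+t^2)$ is increasing in $|t|$ and takes the minimum at $|t|=1/r$ (using $r(x-1)\ge\log u>1$); both give the same bound.
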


\textit{Proof}. Observe that
\begin{eqnarray}
\Re\sum_{j=1}^r\frac{y^j(e^{itj}-1)}{j} &\leq & \frac{1}{r} \Re \sum_{j=1}^ry^j(e^{itj}-1)\nonumber\\
&=& \frac{y^{r+1}}{r(y-1)}\left(\Re \frac{e^{it(r+1)}(y-1)}{ye^{it}-1}-1\right)+\frac{y}{r(y-1)}\left(1-\Re \frac{e^{it}(y-1)}{ye^{it}-1}\right)\nonumber\\
&\leq &
\frac{y^{r+1}}{r(y-1)}\left(\frac{y-1}{|ye^{it}-1|}-1\right)+\frac{2y}{r(y-1)}.
\label{grt0}
\end{eqnarray}
If $|t|\leq \pi$, then
\[
\frac{|ye^{it}-1|}{y-1}=\left(1+\frac{2y(1-\cos{t})}{(y-1)^2}\right)^{\frac{1}{2}}\geq\frac{((y-1)^2+(4/\pi^2)t^2)^{\frac{1}{2}}}{y-1}
\]
because of
\begin{equation}
2t^2/\pi^2\leq 1-\cos{t}\leq t^2/2.
\label{cos}
\end{equation}
Using also
\[
\frac{\alpha}{\sqrt{\alpha^2+v^2}}-1\leq -\frac{1}{2}\frac{v^2}{\alpha^2+v^2},\quad  \alpha\geq 0,\; v\in\R,
\]
with $\alpha=y-1$ and $v=(2/\pi) t$,
we obtain
\[
\frac{y-1}{|ye^{it}-1|}-1\leq -\frac{2}{\pi^2}\frac{t^2}{(y-1)^2+t^2}.
\]
Inserting this into (\ref{grt0}),  we complete the proof of inequality (\ref{grt}).

If $y=x$, $1/r\leq |t|\leq \pi$ and $u\geq 3$, we  combine (\ref{grt}) with estimate (\ref{urx}). We have
\[
    \frac{x^{r+1}}{x-1}=n+\frac{x}{x-1}\geq ur
    \]
and
\[
  1<\log u\leq r(x-1)\leq r(u^{2/(r+1)}-1)\leq \frac{2r}{r+1} u^{2/(r+1)}\log u.
  \]

   So,  we obtain
\begin{eqnarray*}
\Re g_r(t)&\leq& -\frac{1}{\pi^2} \frac{u}{r^2(x-1)^2}+\frac{2}{r}\Big(1+\frac{1}{x-1}\Big)\\
&\leq&
-\Big(\frac{r+1}{2\pi r}\Big)^2 \frac{u^{1-4/(r+1)}}{\log^2 u}+\frac{2}{r}+\frac{2}{\log u}.
\end{eqnarray*}.

   Lemma \ref{7lema} is proved.

\s

\textit{Proof of Theorem} \ref{thm1}. As it has been mentioned in the Introduction, it suffices  to examine the case when $r\geq 4$ and $n$ is large.
In the  introduced notation, we have $u\geq c^{-1} (\log n)(\log\log n)^2$ and
\begin{eqnarray}
P\big(\ell_r(\bar Z)=n\big)&=& \frac{Q(x)}{2\pi}\left(\int_{|t|\leq t_0}+\int_{t_0<|t|\leq \pi}\right)\exp\left\{g_r(t)\right\}e^{-itn}dt\nonumber\\
&=:&
\frac{Q(x)}{2\pi}\big(K_1(n)+K_2(n)\big)\label{P-int}
\end{eqnarray}
with  $t_0:=r^{-7/12}n^{-5/12}$.

Expanding the integrand in $K_1(n)$, we use relations
${\re}^{it}=1+it-t^2/2-it^3/6+Bt^4$ if  $ t\in\mathbf{R}$ and
$    \re^{w}=1+B|w| \re^{|w|}$ if $ w\in\mathbf{C}$.
 Consequently, checking that $\lambda_4t_0^4\leq (r^3 n)(r^{-7/3}n^{-5/3})= (r/n)^{2/3}\leq 1$ and using the abbreviation $\lambda:=\lambda_2$,  we obtain
\begin{eqnarray*}
\exp\{g_r(t)\}&=&
\exp\big\{i \lambda_1t-(\lambda/2)t^2-i(\lambda_3/6)t^3+B\lambda_4t^4\big\}\\
&=&\exp\big\{it \lambda_1-(\lambda/2)t^2\big\}\big(1-i(\lambda_3/6)t^3 +B\lambda_3^2 t^6\big)  + B\lambda_4 t^4 \exp\big\{-(\lambda/2)t^2\big\}\\
&=&\exp\big\{it \lambda_1-(\lambda/2)t^2\big\}\big(1-i(\lambda_3/6)t^3\big)+B\big(\lambda_4t^4+\lambda_3^2 t^6\big)\exp\big\{-(\lambda/2)t^2\big\}.
\end{eqnarray*}
Recall that $u=n/r$, $\lambda_1=n$, $\lambda_k\leq r^k u$ if $k\geq 1$, and,  by Lemma \ref{2lema}, $\lambda=\lambda(x)\sim nr$  as $n\to\infty$  because of $u\to\infty$. We now see that
\begin{eqnarray*}
K_1(n)&=&\int_{|t|\leq t_0} \re^{-(\lambda/2)t^2} dt +\frac{B}{\sqrt\lambda}\left(\frac{\lambda_4}{\lambda^2}+\frac{\lambda_3^2}{\lambda^3}\right)\\
&=&
\sqrt{\frac{2\pi}{ \lambda}}-\frac{1}{\sqrt{\lambda}}\int_{|v|> t_0\sqrt{\lambda}} \re^{-v^2/2} dv+\frac{B}{u\sqrt{\lambda}}=
\sqrt{\frac{2\pi}{ \lambda}}+\frac{B}{u\sqrt{\lambda}}.
\end{eqnarray*}

Considering  $K_2(n)$, we first observe that, by virtue of (\ref{cos}),
$
\Re g_r(t)\leq -(2/\pi^2) \lambda t^2
$
if $t_0\leq |t|\leq 1/r$. Therefore, the contribution of the integral over this interval to $K_2(n)$ equals  $B/u\sqrt{\lambda}$.

Further, we apply Lemmas \ref{2lema} and \ref{7lema} to get
  \begin{eqnarray*}
  K_2(n)&=&B\max_{1/r\leq |t|\leq \pi} \big|\exp\big\{g_r(t)\big\}\big| +\frac{B}{u\sqrt{\lambda}}\\
  &=& \frac{B}{\sqrt\lambda} \exp\bigg\{-\frac{1}{4\pi^2} \frac{u^{1-4/(r+1)}}{\log^2 u}+\frac{1}{2}\log u+\log r\bigg\}+\frac{B}{u\sqrt{\lambda}}.
  \end{eqnarray*}
It remains to prove that the quantity in the large curly braces does not exceed $-\log u+B$ if the bounds of $r$ are as in Theorem \ref{thm1}. This is trivial, if
$4\log u>r+1\geq 5$.
If $4\log u\leq r+1$ and $n$ is sufficiently large, we have an estimate
\[
   \frac{1}{4\pi^2} \frac{u^{1-4/(r+1)}}{\log^2 u}\geq  \frac{3 c u}{\log^2 u}\geq  \frac{3\log n(\log\log n)^2}
      {(\log\log n+2\log\log\log n +B)^2}\sim 3\log n
\]
which assures the desired  bound $K_2(n)=B/u\sqrt \lambda$.

   Inserting  the estimates of $K_j(n)$, $j=1,2$, into (\ref{P-int}), we finish the proof of Theorem \ref{thm1}.

\s

\textit{Proof of Corollary}  \ref{1cor}. In the above notation, we can rewrite
\begin{eqnarray}
\log Q(x)&=&
-n\log x+\int_1^x\sum_{j=1}^r t^{j-1} dt
=-n\log x+\int_1^x\frac{t^r-1}{t-1} dt\nonumber\\
&=&
-n\log x+\int_0^{r\log x}\frac{\re^v-1}{v} \frac{v}{r} \frac{dv}{1-\re^{-v/r}}\nonumber\\
&=&
-u r\log x +I(r\log x)+T(r\log x).
\label{Tx}
\end{eqnarray}

If  $1< u\leq \sqrt{ n/\log n}$, then  $u\log (u+1)=Br$ and, by Lemma \ref{4lema},
\[
 r\log x= \xi +\frac{B\log(u+1)}{r},\qquad I(r\log x)-I(\xi)=Bu(\log(u+1))/r.
\]
Thus, by  Lemma \ref{6lema}, we obtain
\[
T(r\log x)\leq 4 x^r/r= B\re^{\xi}/r= B(u\log(u+1))/r.
\]
 Inserting these  estimates  into (\ref{Tx}), we deduce
\begin{equation}
Q(x)= \exp\{-u\xi+I(\xi)\}\Big(1+\frac{Bu\log (u+1)}{r}\Big).
\label{Qx}
\end{equation}
Observe also that, by Lemma \ref{4lema}, a relation
\begin{equation}
\frac{1}{\sqrt{\lambda}}=\sqrt{\frac{x'}{rx}}=\frac{\sqrt{\xi'}}{r}\Big(1+\frac{B\log (u+1)}{r}\Big)
\label{lambdax}
\end{equation}
 holds if $1< u\leq \sqrt{ n/\log n}$.

Now, it suffices to apply the last two relations only for
 $ c^{-1} (\log n)(\log\log n)^2\leq u\leq \sqrt {n/\log n}$.
Taking into account Lemma \ref{rho-lema}, we can present the formula in Theorem \ref{thm1}  in two ways:
\begin{eqnarray*}
P\big(\ell_r(\bar Z)=n\big)
&=&
\frac{\sqrt{\xi'}}{r\sqrt{2\pi}}  \exp\{-u\xi+I(\xi)\}\Big(1+\frac{Bu\log u}{r}+\frac{B}{u}\Big)\\
&=&
\frac{\re^{-\gamma}}{r} \rho(u)\Big(1+\frac{Bu\log u}{r}+\frac{B}{u}\Big).
\end{eqnarray*}

  Corollary \ref{1cor} is proved.

\section{ Proof of Theorem 3}\label{s:4}

The idea is to use the Cauchy integral (\ref{C-integr}) with  $\alpha=y:=\re^{\xi/r}$ which is a good approximation of the saddle point.
Here, as above, $\xi=\xi(u)$ is defined by the relation $\re^\xi=1+u\xi$ for $u>1$ and $\xi(1)=0$. Such a choice  relates $Q(z)$ with  the Laplace transform of Dickman's function.
Namely, if $z=\re^{-s/r}$, $s=-\xi+ir t=:-\xi+i\tau $, and $|t|\leq\pi$, then, as in (\ref{Tx}),
\begin{equation}
    Q\big(\re^{-s/r}\big)=
    \exp\big\{us+I(-s)+T(-s)\big\}=
    \hat\rho(s)\exp\big\{-\gamma+us+T(-s)\big\},
    \label{Qz}
\end{equation}
where $T(-s)$ is the function examined in Lemma \ref{6lema}.

Observe that, under the  conditions of Theorem 2, $1\leq u\leq \sqrt{ n/\log n}$, where $n$ may be considered large.
Let us introduce the following vertical line segments in the complex plane:
\[
\Delta_0:=\{s=-\xi+i\tau:\; |\tau|\leq \pi\},\qquad  \Delta_1:=\{s=-\xi+i\tau:\; \pi\leq \tau\leq r\pi\},
\]
\[
 \Delta_2:=\{s=-\xi+i\tau:\; -\pi r\leq \tau\leq -\pi\},\qquad \Delta=\{s=-\xi+i\tau:\; |\tau|\leq r\pi\},
 \]
 and $\Delta_\infty=\{s=-\xi+i\tau:\; |\tau|\geq r\pi\}$. Taking into account (\ref{Qz}), we have from (\ref{C-integr})
 \begin{eqnarray*}
P\big(\ell_r(\bar Z)=n\big)&=&\frac{1}{2\pi i}\int_{|z|=y} \frac{Q(z) dz}{z}\nonumber\\
&=&
\frac{\re^{-\gamma}}{r}\frac{1}{2\pi i}\int_{\Delta}\re^{us}\hat\rho(s) ds+
\frac{\re^{-\gamma}}{2\pi r i}\int_{\Delta}\re^{us}\hat\rho(s)\big(\re^{T(-s)}-1\big) ds\\
&=:&
I+J.
\end{eqnarray*}

Using Lemmas \ref{1lema}, \ref{rho-lema}, and  \ref{rholap} for the case $|\tau|\geq \pi r>1+u\xi$, we obtain
\begin{eqnarray*}
I&=&\frac{\re^{-\gamma}\rho(u)}{r}- \frac{1}{2\pi i r u} \int_{\Delta_\infty}\hat\rho(s) d(\re^{us})\\
&=&
\frac{\re^{-\gamma}\rho(u)}{r}+ \frac{B\re^{-u\xi}}{ur^2} + \frac{1}{2\pi i u r}\int_{\Delta_\infty}\re^{us}\hat\rho(s)\frac{\re^{-s}-1}{s} ds\\
&=&
\frac{\re^{-\gamma}\rho(u)}{r}+ \frac{B\re^{\xi-u\xi}}{ur^2} =\\
&=&
\frac{\re^{-\gamma}\rho(u)}{r}+ \frac{B\rho(u) \re^{\xi-I(\xi)}}{r^2} =\\
&=&
\frac{\re^{-\gamma}\rho(u)}{r}\Big(1+\frac{B}{r}\Big).
\end{eqnarray*}
In the last step, we have used the fact that $I(\xi)\sim \re^\xi/\xi$ as $\xi\to\infty$.

 The next task is to estimate $J$. If $s\in\Delta$ then, by Lemma \ref{6lema}, $T(-s)=B$ and  $\exp\{T(-s)\}=1+B T(-s)$. Let us split $J$
 into the sum of three integrals $J_k$  over the strips $\Delta_k$, where  $k=0,1$ and 2, respectively. If  $s\in \Delta_0$ then  $T(-s)=B(1+u\log u)/r$.
  Therefore, using Lemmas \ref{1lema}, \ref{rho-lema}, and  \ref{rholap}, now for the case $|\tau|\leq \pi$, we derive
\begin{eqnarray*}
   J_0&=&\frac{B(1+u\log u)}{r^2}\int_{\Delta_0} \big|\hat\rho(s)\re^{us}\big||ds|\\
   &=&
   \frac{B(1+u\log u)\rho(u)\sqrt u}{r^2}
   \int_{-\pi}^{\pi}\re^{-\tau^2u/(2\pi^2)} d\tau\\
&=&
   \frac{B(1+u\log u)\rho(u)}{r^2}.
\end{eqnarray*}

Further,
\begin{eqnarray*}
   J_1&=&\frac{1}{2\pi i u r}\int_{\Delta_1} \hat\rho(s)\big(\re^{T(-s)}-1\big) d \re^{us}\\
   &=&
   \frac{B\re^{-u\xi}}{u r}\big|\hat\rho(-\xi+\pi i)T(\xi-\pi i)\big| +\frac{B\re^{-u\xi}}{u r}\big|\hat\rho(-\xi+\pi r i)T(\xi-\pi r i)\big|\\
   &&\quad +
   \frac{B}{u r}\int_{\Delta_1} \re^{us}\Big(\hat\rho(s)'\big(\re^{T(-s)}-1\big)- \hat\rho(s)T'(-s)\re^{T(-s)}\Big)d s\\
   &=:& L_1+L_2+\frac{B}{u r}L_3.
   \end{eqnarray*}

To estimate $L_1$, we combine the first estimate of $\hat\rho(s)$ given in Lemma \ref{rholap} with  Lemmas~\ref{1lema} and \ref{rho-lema}. So we obtain
  \[
      L_1=\frac{B(1+u\log u)}{u r^2}\re^{-u\xi+I(\xi)}=\frac{B\rho(u)(1+u\log u)}{r^2}.
\]
Similarly, the second estimate in Lemma \ref{rholap} leads to
  \[
      L_2=\frac{B\re^{-u\xi}}{u r^2}=\frac{B\rho(u)\re^{-I(\xi)}}{r^2\sqrt u}=\frac{B\rho(u)}{r^2}.
      \]

  Estimation of the integral $L_3$ is more subtle. It uses an estimate
  \[
 1- b(-s/r)-T(-s)= B\left(\frac{\re^\xi}{r}+\left|\frac{s}{r}\right|^2\right)
  \]
  following from  Lemma \ref{6lema} and the asymptotic formula  $ b(v)=1+v/2+Bv^2 $  for  $|v|\leq \pi\sqrt2$.
 We have
  \begin{eqnarray*}
L_3&=&
\int_{\Delta_1}\re^{us} \frac{\re^{-s}-1}{s}\hat{\rho}(s)\left(1+\frac{s}{r(1-\re^{s/r})}\re^{T(-s)}\right) ds\\
&=&
\int_{\Delta_1}\re^{us} \frac{\re^{-s}-1}{s}\hat{\rho}(s)\left(1- b(-s/r)\re^{T(-s)}\right)ds\\
&=&
\int_{\Delta_1}\re^{us} \frac{\re^{-s}-1}{s}\hat{\rho}(s)\bigg(\big(1- b(-s/r)-T(-s)\big)+B\Big(\frac{s T(-s)}{r}+T(-s)^2\Big)\bigg)
ds\\
&=&
B\re^{-u\xi}\int_{\Delta_1} \frac{|\re^{-s}-1|}{|s|}|\hat{\rho}(s)|\Big(\frac{e^\xi}{r}+\frac{|s|^2}{r^2}\Big)
|ds|.
\end{eqnarray*}
Using the  two different estimates of $\hat\rho(s)$ on the line segments $\Delta_{11}:=\{s\in\Delta_1:\; |\Im s|\leq 1+u\xi\}$ and
 $\Delta_{12}:=\Delta_1\setminus \Delta_{11}$ given by Lemma \ref{rholap}, we proceed as follows:
 \begin{eqnarray*}
L_3&=&
B\exp\bigg\{ -u\xi+I(\xi)-\frac{u}{\pi^2+\xi^2} +\xi\bigg\}\int_{\Delta_{11}}\frac{1}{|s|} \bigg(\frac{\re^{\xi}}{r}+ \frac{|s|^2}{r^2}\bigg)|d s|\\
&&\qquad
+B\exp\big\{-u\xi +\xi\big\} \int_{\Delta_{12}}\frac{1}{|s|^2} \bigg(\frac{\re^{\xi}}{r}+ \frac{|s|^2}{r^2}\bigg)|d s|\\
&=&
B\exp\bigg\{ -u\xi+I(\xi)-\frac{u}{\pi^2+\xi^2} +2\xi\bigg\}\frac{1+\xi}{r}+\frac{B\exp\big\{-u\xi +\xi\big\}}{r}\\
&=&
\frac{B\rho(u)\sqrt u\log(u+2)}{r}.
\end{eqnarray*}

Collecting the obtained estimates, we obtain
\[
   J_1= L_1+L_2+\frac{B}{u r}L_3=\frac{B\rho(u)(1+u\log u)}{r^2}.
   \]
   The same holds for integral $J_2$. Consequently,
 \[
 P\big(\ell_r(\bar Z)=n\big)=I+J_0+J_1+J_2=
\frac{\re^{-\gamma}\rho(u)}{r}\Big(1+\frac{B(1+u\log u)}{r}\Big).
\]

Theorem 3 is proved.

\smallskip

{\it Proof of Corollary} \ref{2cor}. Combine Theorems 2 and 3 with relations (\ref{Qx}) and (\ref{lambdax}) valid in the region which is not covered by Theorem 2.

\section{ Proof of Theorem 1}\label{s:5}
Most of the lemmata of this section are well known and could be found in the literature.
Let $\C[[u]]$ be the set of formal power series over the field $\C$ and let $[u^n] g(u)$ denote the $n$th coefficient of $g(u)\in \C[[u]]$ where $n\in\N_0$.

\begin{lemma} \label{lema1}
Let
\[
\Phi(u)=\sum_{N=0}^\infty\Phi_N u^N
\]
be a power series in $\C[[u]]$ with $\Phi_0=1$. Then, the equation $u=z\Phi(u)$ admits a
unique solution
 \[
u=f(z)=\sum_{N=1}^\infty f_Nz^N, \quad   f_N=\frac{1}{N} [u^{N-1}]\Phi(u)^N, \quad N\geq 1.
\]
\end{lemma}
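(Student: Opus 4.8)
The statement to prove is the classical Lagrange inversion formula, so the plan is to give the standard proof via formal residues.

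\medskip

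\textbf{Plan of proof.} The plan is to work entirely in the ring of formal power series (and formal Laurent series) and to use the formal residue operator $\operatorname{res}$, defined by $\operatorname{res}\sum_k a_k u^k = a_{-1}$. First I would establish existence and uniqueness of a solution $u=f(z)\in z\C[[z]]$ to $u=z\Phi(u)$: since $\Phi_0=1$, comparing coefficients of $z^N$ in $f(z)=z\Phi(f(z))$ gives $f_1=1$ and a recursion expressing $f_N$ as a polynomial in $f_1,\dots,f_{N-1}$ and the $\Phi_j$, which determines all $f_N$ uniquely. (Convergence issues do not arise since this is a formal identity; if one wants an analytic statement, the implicit function theorem applies because $\partial_u(u-z\Phi(u))|_{0,0}=1\neq0$.) Equivalently, writing $\psi(u):=u/\Phi(u)\in u\C[[u]]$ with $\psi'(0)=1$, the map $\psi$ has a formal compositional inverse $f$, i.e.\ $\psi(f(z))=z$.

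\medskip

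Next I would prove the coefficient formula. Fix $N\geq1$. Since $z=\psi(u)$, substitute $u=f(z)$ and use the change-of-variable rule for formal residues: for any formal Laurent series $H(u)$ one has $\operatorname{res}_z H(f(z)) = \operatorname{res}_u H(u)\,\psi'(u)$ (this is the formal analogue of $\oint H\,dz = \oint H(u)\psi'(u)\,du$, proved by checking it on monomials $H(u)=u^m$, where both sides equal $[m=-1]$, and extending by linearity and continuity). Apply this with $H(u) = f(z)\cdot$ --- more precisely, I want $f_N = [z^N]f(z) = \operatorname{res}_z\, z^{-N-1}f(z)$. Taking $H(u)=u\cdot\psi(u)^{-N-1}$ is not quite it; instead note $z^{-N-1}f(z)$ is the image under $u\mapsto f(z)$ of $u\,\psi(u)^{-N-1}$, so
\[
f_N=\operatorname{res}_z z^{-N-1}f(z)=\operatorname{res}_u\, u\,\psi(u)^{-N-1}\psi'(u).
\]
Now integrate by parts in the formal sense (using $\operatorname{res}(g')=0$ for any formal Laurent series $g$): with $\psi(u)^{-N-1}\psi'(u) = -\frac1N\big(\psi(u)^{-N}\big)'$,
\[
f_N = -\frac1N\operatorname{res}_u\Big(u\big(\psi(u)^{-N}\big)'\Big)
= \frac1N\operatorname{res}_u \psi(u)^{-N}
= \frac1N\operatorname{res}_u\, u^{-N}\Phi(u)^{N}
= \frac1N[u^{N-1}]\Phi(u)^N,
\]
which is exactly the claimed formula.

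\medskip

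\textbf{Main obstacle.} There is no deep obstacle; the proof is routine once the formal framework is set up. The one point that needs genuine care is justifying the formal manipulations---the change-of-variables rule for residues and the integration-by-parts identity $\operatorname{res}(g')=0$---in the ring of formal Laurent series, since $\psi(u)^{-1}$ only makes sense because $\psi\in u\C[[u]]$ with invertible leading coefficient, and one must make sure all substitutions $H(f(z))$ and $u^{-N}\Phi(u)^N$ are well-defined Laurent series (they are: $f\in z\C[[z]]$ has order $1$, and $\Phi(u)^N$ has constant term $1$). I would state these two lemmas on formal residues explicitly (or cite a standard reference, e.g.\ the treatment in Stanley's \emph{Enumerative Combinatorics} or Flajolet--Sedgewick), verify them on monomials, and then the computation above goes through verbatim. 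An alternative, purely combinatorial route---proving $\sum_N f_N z^N$ counts certain rooted trees weighted by $\Phi$---is available but longer; the residue computation is the most economical and is what I would present.
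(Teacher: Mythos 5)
Your proof is correct. The change-of-variable rule for formal residues and the integration-by-parts identity $\operatorname{res}(g')=0$ are applied properly, and the computation
\[
f_N=\operatorname{res}_u\,u\,\psi(u)^{-N-1}\psi'(u)
=-\tfrac1N\operatorname{res}_u\,u\bigl(\psi(u)^{-N}\bigr)'
=\tfrac1N\operatorname{res}_u\,\psi(u)^{-N}
=\tfrac1N[u^{N-1}]\Phi(u)^N
\]
goes through exactly as you wrote it. One small stylistic quibble: you write ``Taking $H(u)=u\cdot\psi(u)^{-N-1}$ is not quite it; instead note\dots'' and then show that it \emph{is} exactly it, since $z^{-N-1}f(z)=H(f(z))$ with that $H$; the self-correction language is a little misleading, but the mathematics is right.

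The paper does not actually prove this lemma; it simply cites Flajolet--Sedgewick, reserving its own brief argument for the closely related Lemma~\ref{lema2}, where it uses the analytic Cauchy-integral version of the substitution $z=u/\Phi(u)$ under the assumption that $\Phi$ may be taken to be a polynomial. Your treatment gives the purely formal, self-contained version of the same idea: the substitution step is encoded as the residue change-of-variable rule and the integration by parts as $\operatorname{res}(g')=0$, both verified on monomials and extended by linearity. This is the standard algebraic proof of Lagrange--B\"urmann and is essentially equivalent to the analytic route the paper sketches for Lemma~\ref{lema2}; what it buys you is independence from convergence considerations and from the polynomial truncation device, at the modest cost of having to state (and, in a careful write-up, prove) the two residue lemmas in the formal Laurent setting.
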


\textit{Proof}. This is Lagrange-B\"{u}rmann Inversion Formula, presented, for instance on page 732 of a fairly concise book \cite{Fl-Sed}.

\smallskip

In this note, superpositions of series involving $f(z)$ are used, therefore we recall more variants of the inversion formula. Let us stress that, by Lemma \ref{lema1}, $f_1=1$; therefore,  $z/f(z)$ and $\log \big(z/f(z)\big)$ have  formal power series expansions.

\begin{lemma} \label{lema2} Let $f(z)$ be as in Lemma $\ref{lema1}$ and  $j\in \N$. Then
\[
   [z^N]\Big(\frac{z}{f(z)}\Big)^j= \frac{j}{j-N}[u^N]\Phi(u)^{N-j}
   \]
 if $N\in \N_0\setminus\{j\}$ and
\[
            [z^j]\Big(\frac{z}{f(z)}\Big)^j=-[u^{j-1}]\Big(\frac{\Phi'}{\Phi}(u)\Big).
                        \]
Moreover,
\[
   [z^N]\log \frac{z}{f(z)}= -\frac{1}{N}[u^N]\Phi(u)^{N}
   \]
   if $N\geq1$.
\end{lemma}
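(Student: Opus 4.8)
The plan is to deduce all three identities from a single ``$H$-form'' of the Lagrange--B\"urmann formula: for $f(z)$ as in Lemma~\ref{lema1} and any $H(u)\in\C[[u]]$,
\[
[z^N]H(f(z))=\frac1N[u^{N-1}](H'(u)\Phi(u)^N),\qquad N\geq1.
\]
This is an immediate extension of Lemma~\ref{lema1} (take $H(u)=u$ to recover it): expanding $H(u)=\sum_{m\geq0}h_mu^m$, using the power form $[z^N]f(z)^m=(m/N)[u^{N-m}]\Phi(u)^N$ together with $[u^{N-m}]\Phi(u)^N=[u^N](u^m\Phi(u)^N)$, one gets $[z^N]H(f(z))=\frac1N[u^N](uH'(u)\Phi(u)^N)$. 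Alternatively one may cite this form from \cite{Fl-Sed}. The rest of the proof then rests only on the trivial rule $[u^{N-1}]g'(u)=N[u^N]g(u)$ and on separating the exceptional index.

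To apply it, note that $f(z)=z\Phi(f(z))$ gives $z/f(z)=\Phi(f(z))^{-1}$, so $(z/f(z))^j=H(f(z))$ with $H(u)=\Phi(u)^{-j}$, and $\log(z/f(z))=\widetilde H(f(z))$ with $\widetilde H(u)=-\log\Phi(u)$; both are honest power series since $\Phi_0=1$. For the first statement, plug $H(u)=\Phi(u)^{-j}$, $H'(u)=-j\Phi'(u)\Phi(u)^{-j-1}$, into the $H$-form to obtain, for $N\geq1$,
\[
[z^N](z/f(z))^j=-\frac jN[u^{N-1}](\Phi'(u)\Phi(u)^{N-j-1}).
\]
If $N\neq j$ write $\Phi'\Phi^{N-j-1}=\frac1{N-j}(\Phi^{N-j})'$ and apply $[u^{N-1}]g'=N[u^N]g$ to get $\frac j{j-N}[u^N]\Phi(u)^{N-j}$; the remaining case $N=0$ is checked by hand from $\Phi_0=1$. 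If $N=j$ the factor is just $\Phi^{-1}$, and the same display collapses to $-[u^{j-1}](\Phi'/\Phi)(u)$. For the third statement, plug $\widetilde H'(u)=-\Phi'(u)/\Phi(u)$ into the $H$-form: $[z^N]\log(z/f(z))=-\frac1N[u^{N-1}](\Phi'\Phi^{N-1})=-\frac1{N}[u^{N-1}]\frac1N(\Phi^N)'=-\frac1N[u^N]\Phi(u)^N$.

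The only point needing care is the status of the $H$-form of the inversion formula, since the text has stated only the bare Lemma~\ref{lema1}; I would either include the two-line derivation above or cite \cite{Fl-Sed}. Everything past that is routine manipulation of formal power series, the one genuine case distinction being $N=j$ versus $N\neq j$ (plus the trivial $N=0$).
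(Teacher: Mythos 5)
Your proof is correct, and it takes a genuinely different route from the paper's. The paper works analytically: it truncates $\Phi$ to a polynomial of degree $N$ (so that $f$ becomes an honest analytic germ), writes the coefficient as a Cauchy integral over a small circle $|z|=\delta$, performs the conformal substitution $z=u/\Phi(u)$, and then integrates by parts to peel off the exact-derivative terms. You instead stay entirely inside $\C[[u]]$: you invoke the functional ($H$-)form of Lagrange--B\"urmann, exploit the algebraic identity $z/f(z)=\Phi(f(z))^{-1}$ (which the paper never writes down explicitly), and reduce everything to the trivial rule $[u^{N-1}]g'=N[u^N]g$. The trade-off is that the paper only ever cites the bare Lemma~\ref{lema1} and supplies its own derivation of the needed variants, whereas your argument leans on a stronger black box; as you note yourself, the $H$-form is not literally Lemma~\ref{lema1}, so you must either sketch the derivation $[z^N]H(f)=\frac1N[u^{N-1}](H'\Phi^N)$ from the power form or cite it directly from \cite{Fl-Sed}. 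Once that is granted, your proof is shorter and avoids all contour integration and the polynomial-truncation device. Both proofs agree on the one substantive case distinction ($N=j$ versus $N\neq j$, plus the check at $N=0$), and your handling of each branch, including the cancellation to $\frac{j}{j-N}[u^N]\Phi^{N-j}$ and the collapse to $-[u^{j-1}](\Phi'/\Phi)$ when $N=j$, is correct.
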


\textit{Proof}. Without a proof the first part of Lemma \ref{lema2}  is exposed as \textbf{A.11} on pages 732-733 of  \cite{Fl-Sed}; an inaccuracy is left in the case $N=j$, however.  For readers convenience,   we provide a sketch of a proof.

    Let $N\in\N_0$ be fixed. The coefficients under consideration have expressions in terms of $\Phi_k$ with $0\leq k\leq N$ only; therefore, we may assume that $\Phi(u)$ is a polynomial of degree $N$. Then $f(z)$ is well defined as an analytic function in a vicinity of the zero point. Thus, we may  apply Cauchy's formula. Afterwards let  $\delta$ and $\delta_1$ be sufficiently small positive constants. Using a substitution $z=u/\Phi(u)$ and properties of the one-to-one conformal mapping of the vicinities of the zero points in the $z$- and $u$-complex planes,  we obtain
\begin{eqnarray*}
[z^N]\Big(\frac{z}{f(z)}\Big)^j&=&
\frac{1}{2\pi i}\int_{|z|=\delta}\frac{dz}{f(z)^j z^{N+1-j}}\\
&=&
\frac{1}{2\pi i}\int_{|u|=\delta_1}\frac{d\big(u/\Phi(u)\big)}{u^j\big(u/\Phi(u)\big)^{N+1-j}}\\
&=&
\frac{1}{2\pi i}\int_{|u|=\delta_1}\frac{\Phi(u)^{N-j} du}{u^{N+1}}-\frac{1}{2\pi i}\int_{|u|=\delta_1}\frac{\Phi(u)^{N-j-1} d\Phi(u)}{u^N}\\
&=&
[u^N]\Phi(u)^{N-j}-\frac{1}{2\pi (N-j)i}\int_{|u|=\delta_1}\frac{d\Phi(u)^{N-j}}{u^{N}}\\
&=&
[u^N]\Phi(u)^{N-j}-\frac{N}{2\pi (N-j)i}\int_{|u|=\delta_1}\frac{\Phi(u)^{N-j} du}{u^{N+1}}\\
&=&
\frac{j}{j-N}[u^N]\Phi(u)^{N-j}
\end{eqnarray*}
provided that $N\not=j$.

The same argument gives
\begin{eqnarray*}
[z^j]\Big(\frac{z}{f(z)}\Big)^j&=&
\frac{1}{2\pi i}\int_{|u|=\delta_1}\frac{d u}{u^{j+1}}-\frac{1}{2\pi i}\int_{|u|=\delta_1}\frac{\Phi'}{\Phi}(u) \frac{du}{u^j}\\
&=&
-[u^{j-1}]\Big(\frac{\Phi'}{\Phi}(u)\Big).
\end{eqnarray*}

Finally, applying the previous substitution, we derive
\begin{eqnarray*}
[z^N]\log \frac{z}{f(z)}&=&
\frac{1}{2\pi N i }\int_{|z|=\delta}\frac{1}{z^{N}}d\log\frac{z}{f(z)} \\
&=&
-\frac{1}{2\pi N^2i}\int_{|u|=\delta_1}\frac{ d\Phi(u)^N}{u^N}\\
&=&
-\frac{1}{N}[u^N] \Phi(u)^N.
\end{eqnarray*}

The lemma is proved.

We will apply the lemmas in a very particular case. Then the first power series coefficients of implicitly defined functions attain a simple form.
 Let ${\mathbf 1}\{\cdot\}$ stand for the indicator function.

\begin{lemma}\label{lema3}    Let $k,r, j\in\N$, $y=y(z)$ satisfy an equation
 \[
 y=z\bigg(\frac{1-y^r}{1-y}\bigg)^{1/r},
 \]
 and let $g(z):=z/y(z)$, then  the following assertions hold.

   $(I)$ \quad If $g(z)^j=:\sum_{N=0}^\infty g_N^{(j)} z^N$, then
      \[
  g_N^{(j)}= \frac{j}{j-N}\sum_{rl+m=N\atop l,m\in\N_0} {(N-j)/r\choose l}(-1)^l {m-1+(N-j)/r\choose m}
   \]
   for $N\in\N_0\setminus\{j\}$ and
  \begin{equation}
  g_j^{(j)}={\mathbf 1}\{j\equiv 0({\rm mod}\, r)\}-\frac{1}{r}.
   \label{gjj}
\end{equation}

   $(II)$ \quad If $\log g(z)=:\sum_{N=1}^\infty b_N z^N$, then
  \[
  b_N=-\frac{1}{N}\sum_{rl+m=N\atop l,m\in\N_0} {N/r\choose l}(-1)^l {m-1+N/r\choose m}, \quad N\geq1.
    \]

 $(III)$ \quad If
 \[
      h(z):=\sum_{j=1}^r\frac{1}{j y(z)^j}=\sum_{N=-r}^\infty h_N z^N,
      \]
      then
       $h_{-r}=1/r$,
\[
  h_0=- \frac{1}{r}\sum_{j=2}^r\frac{1}{j}
\]
and
\[
h_N= \frac{N+r}{N} b_{N+r}
\]
for  $N=-r+1,-r+2,\dots$ and $N\not=0$.

 $(IV)$ \quad If
 \[
      \Lambda(z):=\bigg(z^r\sum_{j=1}^r\frac{j}{y(z)^j}\bigg)^{-1}=\sum_{N=0}^\infty \Lambda_N z^N
      \]
then $\Lambda_0=1/r$ and $\Lambda_N=-Nb_N/r$ for $N=1, 2,\dots$.
    \end{lemma}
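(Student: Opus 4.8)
The plan is to recognise the implicitly defined $y=y(z)$ as an instance of the Lagrange--B\"urmann situation of Lemma~\ref{lema1}. Put $\Phi(u):=\big((1-u^r)/(1-u)\big)^{1/r}=(1+u+\cdots+u^{r-1})^{1/r}$; then $\Phi\in\C[[u]]$ with $\Phi_0=\Phi(0)=1$, and the defining equation of $y$ is exactly $y=z\Phi(y)$. Hence $y=f(z)$ in the notation of Lemma~\ref{lema1}, $f_1=1$, and $g(z)=z/y(z)=z/f(z)$, so Lemma~\ref{lema2} applies verbatim. To convert its abstract coefficient formulas into the explicit binomial sums I would use the formal identity $\Phi(u)^a=(1-u^r)^a(1-u)^{-a}$, valid for every real $a$ because $\log\big((1-u^r)/(1-u)\big)=\log(1-u^r)-\log(1-u)$ as formal power series, together with $(1-u^r)^a=\sum_{l\geq0}\binom{a}{l}(-1)^lu^{rl}$ and $(1-u)^{-a}=\sum_{m\geq0}\binom{m-1+a}{m}u^m$, the latter via $\binom{-a}{m}(-1)^m=\binom{m-1+a}{m}$. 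Multiplying and collecting the coefficient of $u^N$, with the constraint $rl+m=N$, $l,m\in\N_0$, rewrites $[u^N]\Phi(u)^{N-j}$ and $[u^N]\Phi(u)^N$ as the stated sums; with Lemma~\ref{lema2} this yields $(I)$ for $N\neq j$ and $(II)$.

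For the exceptional coefficient, Lemma~\ref{lema2} gives $g_j^{(j)}=-[u^{j-1}](\Phi'/\Phi)(u)$. Since $\log\Phi(u)=\tfrac1r\big(\log(1-u^r)-\log(1-u)\big)$, differentiation yields $(\Phi'/\Phi)(u)=\tfrac1r\big((1-u)^{-1}-ru^{r-1}(1-u^r)^{-1}\big)$, and as $ru^{r-1}(1-u^r)^{-1}=r\sum_{k\geq0}u^{rk+r-1}$ its $u^{j-1}$-coefficient equals $\tfrac1r\big(1-r\,{\mathbf 1}\{j\equiv0\ ({\rm mod}\ r)\}\big)$; this gives (\ref{gjj}).

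Parts $(III)$ and $(IV)$ I would derive from two consequences of the defining equation. From $y=z\Phi(y)$ one gets $\Phi(y)^r=y^r/z^r$, i.e. $1+y+\cdots+y^{r-1}=y^r/z^r$; dividing by $y^r$ gives the clean identity $\sum_{j=1}^r y^{-j}=z^{-r}$, while a short computation with the same relation gives $\Phi(1/y)^r=y/z^r$. Next, for $\phi(u):=\sum_{j=1}^r u^j/j$ we have $\phi'(u)=\sum_{j=1}^r u^{j-1}=\Phi(u)^r$, hence, since $h(z)=\phi(1/y(z))$, the chain rule and $\Phi(1/y)^r=y/z^r$ give $h'(z)=\phi'(1/y)\cdot(-y'(z)/y^2)=-y'(z)/(y(z)z^r)$. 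Combined with $\log y=\log z-\log g$, that is $-y'/y=(\log g)'-1/z$, this yields $h'(z)=z^{-r}\big((\log g)'(z)-1/z\big)=\sum_{N\geq1}Nb_Nz^{N-1-r}-z^{-1-r}$; matching against $h'(z)=\sum_{M\geq-r}Mh_Mz^{M-1}$ gives at once $h_{-r}=1/r$ and $h_N=\tfrac{N+r}{N}b_{N+r}$ for every $N\geq-r+1$ with $N\neq0$. The constant term $h_0$ is invisible to $h'$, so it must be read off from $(I)$: $h_0=\sum_{j=1}^r\tfrac1j[z^j]g(z)^j=\sum_{j=1}^r\tfrac1j\big({\mathbf 1}\{j\equiv0\ ({\rm mod}\ r)\}-\tfrac1r\big)=\tfrac1r-\tfrac1r\sum_{j=1}^r\tfrac1j=-\tfrac1r\sum_{j=2}^r\tfrac1j$. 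Finally, for $(IV)$, differentiating $z^{-r}=\sum_{j=1}^r y^{-j}$ with respect to $y$, where $z=z(y)$ is the inverse series, gives $\sum_{j=1}^r jy^{-j}=-y\,\tfrac{d}{dy}(z^{-r})=ryz^{-r-1}\,\tfrac{dz}{dy}$, hence $z^r\sum_{j=1}^r j/y^j=(ry/z)\,dz/dy$ and $\Lambda(z)=\tfrac1r\,\tfrac{z}{y}\,\tfrac{dy}{dz}=\tfrac1r\big(1-z(\log g)'(z)\big)$ by $\log y=\log z-\log g$; expanding $z(\log g)'=\sum_{N\geq1}Nb_Nz^N$ gives $\Lambda_0=1/r$ and $\Lambda_N=-Nb_N/r$ for $N\geq1$.

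The binomial bookkeeping in $(I)$ and $(II)$ is routine. The points needing care are the formal-power-series justification of $\Phi(u)^a=(1-u^r)^a(1-u)^{-a}$ and of the index ranges ($g$, $\log g$ and $\Lambda$ are honest power series, whereas $h$ is a Laurent series with a pole of order $r$ at the origin), and, above all, the observation that $(III)$ cannot be finished by the differential identity alone: the constant term $h_0$ has to be recovered from $(I)$. That coupling of $(I)$ with $(III)$ is the one genuinely non-mechanical step.
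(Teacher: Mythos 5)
Your proposal is correct and follows essentially the same route as the paper: recognize $\Phi(u)=\big((1-u^r)/(1-u)\big)^{1/r}$ as the Lagrange--B\"urmann datum, deduce $(I)$ and $(II)$ from Lemma~\ref{lema2} via the product expansion $(1-u^r)^a(1-u)^{-a}$, compute the exceptional coefficient $g_j^{(j)}$ from $\Phi'/\Phi$, obtain $(III)$ (for $N\not=0$) and $(IV)$ by differentiating the identity $\sum_{j=1}^r y^{-j}=z^{-r}$ together with $\log y=\log z-\log g$, and recover $h_0$ separately from $(I)$. The only cosmetic difference is that you match formal power-series coefficients directly where the paper writes the same manipulations as Cauchy integrals; the content, including the key observation that $h_0$ is invisible to the derivative identity and must come from $g_j^{(j)}$, is identical.
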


   \textit{Proof}. To prove $(I)$, combine Lemmas \ref{lema1} and  \ref{lema2} with  an equality
     \[
    [y^{N}]\bigg(\frac{1-y^r}{1-y}\bigg)^{\alpha}=\sum_{rl+m=N\atop l,m\in\N_0} {\alpha\choose l}(-1)^l {m-1+\alpha \choose m}, \quad N\in\N_0,\, \alpha\in\R.
   \]

For $(\ref{gjj})$, apply the second part of Lemma \ref{lema2} to obtain
\[
   g_j^{(j)}=[y^{j-1}]\bigg(\frac{y^{r-1}}{1-y^r}-\frac{1}{r(1-y)}\bigg)={\mathbf 1}\{j\equiv 0({\rm mod}\, r)\}-\frac{1}{r}.
\]

   Similarly, $(II$) follows from the last formula in Lemma \ref{lema2}.

Having in mind that $z^rh(z)$ has a power series expansion in $\C[[z]]$, we may apply the same principles. Using $(\ref{gjj})$, it is easy to check that
\begin{eqnarray*}
h_0&=&\frac{1}{2\pi  i }\int_{|z|=\delta}\frac{ h(z) dz}{z} =
\sum_{j=1}^r\frac{1}{j} \frac{1}{2\pi  i }\int_{|z|=\delta}\frac{ g(z)^j d z}{z^{j+1}}\\
&=&
\sum_{j=1}^r\frac{1}{j} g_j^{(j)}=- \frac{1}{r}\sum_{j=2}^r\frac{1}{j}.
\end{eqnarray*}
Further, we  observe that
\begin{equation}
   \sum_{j=1}^r \frac{1}{y^j}=\frac{1-y^r}{y^r(1-y)}=\frac{1}{z^{r}}.
\label{frac}
\end{equation}
Hence
\[
      h'(z)=-\frac{y'}{y}(z) \sum_{j=1}^r \frac{1}{y^j}=-\frac{y'}{y}(z) \frac{1}{z^{r}}= \frac{g'}{g}(z) \frac{1}{z^{r}}-\frac{1}{z^{r+1}}, \quad z\not=0.
\]
This implies that
\begin{eqnarray*}
h_N&=&\frac{1}{2\pi N i }\int_{|z|=\delta}\frac{ d h(z)}{z^{N}} \\
&=&
\frac{1}{2\pi N i }\int_{|z|=\delta}\frac{ d (\log g(z))}{z^{N+r}}-\frac{1}{2\pi N i }\int_{|z|=\delta}\frac{ d z}{z^{N+r+1}} \\
 &=& \frac{N+r}{N} b_{N+r}
\end{eqnarray*}
if $N\geq -r+ 1$ and $h_{-r}=1/r$.

To prove $(IV)$, we use relation (\ref{frac}) again. Differentiating it, we arrive at
\begin{eqnarray*}
          \Lambda(z)&=&\frac{z}{r} \frac{y'}{y}(z)=\frac{1}{r}-\frac{z\big(\log g(z)\big)'}{r}\\
          &=&\frac{1}{r}\bigg(1-\sum_{N=1}^\infty Nb_N z^N\bigg).
\end{eqnarray*}
The assertion $(IV)$ now is evident.

   The lemma is proved.

\begin{corollary}\label{cor1} As above, let $g(z)=z/y(z)$. Then $g_0=1$, $g_1=-1/r$,
    \begin{equation*}
   g_N=  \frac{ \Gamma(N+(N-1)/r)}{(1-N)\Gamma(N+1)\Gamma((N-1)/r)}
   \end{equation*}
   if $2\leq N\leq r-1$, and
   \[
   g_r=  \frac{ \Gamma(r+1-1/r)}{(1-r)\Gamma(r+1)\Gamma(1-1/r)}+\frac{1}{r}.
   \]
   Moreover,
   $
    |g_N|\leq \frac{1}{N-1} r^{(N-1)/r}
$
if $N\geq 2$.
     \end{corollary}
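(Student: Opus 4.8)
\textit{Proof of Corollary} \ref{cor1} (plan). The plan is to specialise Lemma~\ref{lema3}$(I)$ to $j=1$, since $g_N=g_N^{(1)}$ by the very definition $g(z)=z/y(z)$. For $N=0$ the only index pair with $rl+m=0$ is $(l,m)=(0,0)$, which contributes $1$, so $g_0=1$; alternatively $y(z)=z+\rO(z^2)$ forces $z/y(z)=1+\rO(z)$. For $N=1$ we are in the exceptional case $N=j$ of Lemma~\ref{lema3}$(I)$, and $(\ref{gjj})$ gives $g_1={\mathbf 1}\{1\equiv 0\,({\rm mod}\,r)\}-1/r=-1/r$ because $r\geq 2$. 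In every remaining case $N\neq j=1$, so Lemma~\ref{lema3}$(I)$ applies and gives
\[
   g_N=\frac{1}{1-N}\sum_{rl+m=N\atop l,m\in\N_0}\binom{(N-1)/r}{l}(-1)^l\binom{m-1+(N-1)/r}{m}.
\]

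Next I would evaluate this sum in the two stated ranges. If $2\leq N\leq r-1$, the constraint $rl+m=N$ with $l\geq 0$ forces $l=0$ and $m=N$, so a single summand survives; writing $\beta=(N-1)/r$ and using $\beta(\beta+1)\cdots(\beta+N-1)=\Gamma(\beta+N)/\Gamma(\beta)$ together with $N!=\Gamma(N+1)$, we obtain
\[
   g_N=\frac{1}{1-N}\binom{N-1+\beta}{N}=\frac{\Gamma\big(N+(N-1)/r\big)}{(1-N)\,\Gamma(N+1)\,\Gamma\big((N-1)/r\big)}.
\]
If $N=r$, the admissible pairs are $(l,m)=(0,r)$ and $(l,m)=(1,0)$. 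With $\beta=(r-1)/r=1-1/r$, the first pair contributes $\binom{r-1+\beta}{r}=\Gamma(r+1-1/r)\big/\big(\Gamma(r+1)\Gamma(1-1/r)\big)$ after the same manipulation (note $r-1+\beta=r+1-1/r$), while the second contributes $\binom{\beta}{1}(-1)=-(r-1)/r$. Multiplying by $1/(1-r)$ turns the second term into $1/r$ and yields exactly the asserted value of $g_r$.

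For the estimate I would return to the Cauchy-integral identity underlying Lemma~\ref{lema2}: for $N\geq 2$ and any $\rho\in(0,1)$,
\[
   g_N=\frac{1}{1-N}\,[u^N]\Big(\frac{1-u^r}{1-u}\Big)^{(N-1)/r}=\frac{1}{1-N}\cdot\frac{1}{2\pi i}\oint_{|u|=\rho}\frac{\big(1+u+\cdots+u^{r-1}\big)^{(N-1)/r}}{u^{N+1}}\,du,
\]
the power being the branch that is analytic on $|u|<1$ and equals $1$ at $u=0$; this is legitimate because $1+u+\cdots+u^{r-1}=(1-u^r)/(1-u)$ has no zero in the open unit disk. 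On $|u|=\rho$ we have $\big|1+u+\cdots+u^{r-1}\big|\leq 1+\rho+\cdots+\rho^{r-1}\leq r$, hence the numerator has modulus at most $r^{(N-1)/r}$ since $(N-1)/r>0$. Estimating the integral by $r^{(N-1)/r}/\rho^{N}$ and letting $\rho\to 1^-$ gives $|g_N|\leq r^{(N-1)/r}/(N-1)$, as claimed. The only step that is not mere bookkeeping is this last one: one must pass from the formal series to a contour inside the unit disk and be careful that the non-integer power of the palindromic polynomial is taken along the correct single-valued branch, so that the bound $|1+u+\cdots+u^{r-1}|\leq r$ transfers to the power; everything else reduces to reading off one or two binomial coefficients and rewriting products of shifted arguments as ratios of Gamma functions. \hfill$\square$
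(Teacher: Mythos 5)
Your argument reproduces the paper's proof: specialise Lemma~\ref{lema3}$(I)$ to $j=1$, observe that the constraint $rl+m=N$ leaves only the pair $(0,N)$ when $2\leq N\leq r-1$ and the pairs $(0,r),(1,0)$ when $N=r$, handle $N=1$ via~(\ref{gjj}), and obtain the bound from Cauchy's coefficient inequality applied to $(1+y+\cdots+y^{r-1})^{(N-1)/r}$ on a circle of radius tending to $1$ (the paper simply writes ``by Cauchy's inequality''). One slip in a parenthetical: with $\beta=1-1/r$ you write ``note $r-1+\beta=r+1-1/r$,'' but $r-1+\beta=r-1/r$; the correct identity feeding the Gamma ratio is $\binom{r-1+\beta}{r}=\Gamma(r+\beta)/\bigl(\Gamma(r+1)\Gamma(\beta)\bigr)$ with $r+\beta=r+1-1/r$, so your displayed formula for $g_r$ is nevertheless right.
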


  \textit{Proof}. Apply $(I)$ of Lemma \ref{lema3} for $j=1$. If $2\leq N\leq r-1$, the relevant sum has the only nonzero summand corresponding to the pair $(l,m)=(0,N)$. A formula for $g_r$ has two summands giving the expression. If $N\geq 2$, then by Lemma \ref{lema3} and Cauchy's inequality,
  \[
    |g_N|=  \frac{1}{N-1}\Big|[y^N](1+y+\cdots+y^{r-1})^{(N-1)/r}\Big|\leq \frac{1}{N-1} r^{(N-1)/r}.
\]

The corollary is proved.

\begin{corollary} \label{cor2} We have
      \begin{equation}
  b_N=- \frac{\Gamma(N+N/r)}{N\Gamma(N+1)\Gamma(N/r)}
  \label{bN}
  \end{equation}
  if $1\leq N\leq r-1$ and $b_r=0$.

  Moreover,
  \[
  N|b_N|\leq 1 \quad  if\:  N\leq r-1,
  \]
  \[
  b_N =BN/r \quad  if\:  r<N\leq 2 r-1,
  \]
and
   \begin{equation*}
          N|b_N|\leq  r^{N/r}\quad    if\; N\geq 1.
       \end{equation*}
     \end{corollary}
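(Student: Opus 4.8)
The plan is to read off every assertion of the corollary from the closed form for $b_N$ furnished by part $(II)$ of Lemma \ref{lema3}, namely
\[
b_N=-\frac1N\sum_{rl+m=N,\ l,m\in\N_0}\binom{N/r}{l}(-1)^l\binom{m-1+N/r}{m}=-\frac1N[y^N]\bigl(1+y+\cdots+y^{r-1}\bigr)^{N/r},\qquad N\ge1 ,
\]
the last expression coming from $(1-y^r)^{N/r}(1-y)^{-N/r}=\bigl((1-y^r)/(1-y)\bigr)^{N/r}$.

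First I would treat $1\le N\le r-1$. Since $rl+m=N$ with $l,m\ge0$ forces $l=0$, $m=N$, only one summand survives and
\[
b_N=-\frac1N\binom{N-1+N/r}{N}=-\frac1N\cdot\frac{\Gamma(N+N/r)}{\Gamma(N/r)\,\Gamma(N+1)} ,
\]
which is (\ref{bN}); the middle step is just $\binom{N-1+N/r}{N}=\frac1{N!}\prod_{j=0}^{N-1}(j+N/r)=\Gamma(N+N/r)/\bigl(\Gamma(N/r)\Gamma(N+1)\bigr)$. For $N=r$ the exponent $N/r$ equals $1$ and $(1+y+\cdots+y^{r-1})^1$ has degree $r-1<r$, so its $y^r$-coefficient vanishes and $b_r=0$. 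Then $N|b_N|\le1$ for $N\le r-1$ follows from the product form $N|b_N|=\prod_{k=1}^{N}\frac{N/r+k-1}{k}$: the factor for $k=1$ is $N/r<1$, and the factor for $k\ge2$ is $<1$ because $N/r+k-1<k$.

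Next, for $r<N\le2r-1$ put $N=r+s$ with $1\le s\le r-1$, so that $N/r\in(1,2)$ and $rl+m=N$ has exactly the two solutions $(l,m)=(0,N)$ and $(1,s)$; hence
\[
b_N=-\frac1N\binom{N-1+N/r}{N}+\frac1r\binom{s-1+N/r}{s} .
\]
Bounding the $k$th factor of each generalized binomial by $1+1/k$ (legitimate since $N/r<2$) gives $\binom{N-1+N/r}{N}<N+1$ and $\binom{s-1+N/r}{s}<s+1\le r$, so $|b_N|\le\frac{N+1}{N}+\frac{s+1}{r}\le3$; as $N/r>1$ in this range this reads $b_N=BN/r$. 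Finally, $N|b_N|\le r^{N/r}$ for every $N\ge1$ is proved exactly as in Corollary \ref{cor1}: $(1+y+\cdots+y^{r-1})^{N/r}$ is analytic in $|y|<1$, so Cauchy's inequality on $|y|=\rho$ with $0<\rho<1$, together with $\max_{|y|=\rho}|1+y+\cdots+y^{r-1}|\le 1+\rho+\cdots+\rho^{r-1}<r$, yields $N|b_N|\le r^{N/r}\rho^{-N}$, and letting $\rho\to1$ completes the argument.

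I do not anticipate a genuine obstacle: the corollary is essentially bookkeeping on top of Lemma \ref{lema3}. The only two points that need a word of care are the branch/analyticity justification for raising the polynomial $1+y+\cdots+y^{r-1}$ to the non-integral power $N/r$ (handled by staying inside the unit disc, where the polynomial does not vanish, and passing to the limit $\rho\to1$) and the choice of clean constants in the band $r<N\le2r-1$; there the two surviving summands in fact nearly cancel, but the crude factorwise estimates above already suffice for the claimed $BN/r$.
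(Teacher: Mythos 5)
Your proof is correct and follows essentially the same route as the paper's: you read off each claim from assertion $(II)$ of Lemma \ref{lema3} (i.e., from the identity $-Nb_N = [y^N](1+y+\cdots+y^{r-1})^{N/r}$), identify the single surviving summand when $N\le r-1$ to get (\ref{bN}) and the bound $N|b_N|\le1$ via the telescoping product, split the case $r<N\le 2r-1$ into its two summands and bound crudely, and finish the uniform bound $N|b_N|\le r^{N/r}$ by Cauchy's inequality on circles $|y|=\rho<1$. The only cosmetic differences are that you get $b_r=0$ directly from the degree count of $(1+y+\cdots+y^{r-1})^1$ rather than cancelling two explicit summands, and in the band $r<N\le 2r-1$ you bound each factor by $1+1/k$ to get $|b_N|\le 3$ instead of the paper's $\log(1+x)\le x$ exponentiation; both yield $b_N=BN/r$.
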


  \textit{Proof}. Again, if $1\leq N\leq r-1$,  it suffices to observe that the relevant sum $(II)$ of  Lemma \ref{lema3} has the only nonzero summand corresponding to the pair $(l,m)=(0,N)$. A formula for $b_r$ has two subtracting summands.

If $N\leq r-1$, the given estimate follows from (\ref{bN}). If $r<N\leq 2r-1$, assertion $(II)$ in Lemma \ref{lema3} gives
     \begin{eqnarray*}
   b_N&=&
   -\frac{1}{N}{N-1+N/r\choose N}+\frac{1}{r}{N-r-1+N/r\choose N-r}\\
   &=&
   -\frac{1}{r} \prod_{k=2}^N\Big(1+\frac{N/r-1}{k}\Big)+\frac{N}{r^2} \prod_{k=2}^{N-r}\Big(1+\frac{N/r-1}{k}\Big)\\
   &=&
       \frac{B}{r}\exp\bigg\{\Big(\frac{N}{r}-1\Big)\sum_{k=2}^N \frac{1}{k}\bigg\}=\\
       &=&
    \frac{B}{r}\exp\bigg\{\Big(\frac{N}{r}-1\Big)\log N\bigg\}=  \frac{ BN}{r}.
   \end{eqnarray*}
   We have applied an inequality $\log(1+x)\leq x$ if $x>0$.

   Finally, by Cauchy's inequality,
   \begin{equation}
          N|b_N|=
     \bigg| [y^{N}]\bigg(\frac{1-y^r}{1-y}\bigg)^{N/r}\bigg|=
      \big| [y^{N}](1+y+\cdots+ y^{r-1})^{N/r}\big|\leq
       r^{N/r}
       \label{NbN}
       \end{equation}
       if $N\geq 1$.

   The corollary is proved.

We now prove the promised expansion (\ref{xnr-Skleid}).

\begin{lemma} \label{lema11}
If $2\leq r\leq \log n$, then
 \begin{eqnarray*}
  x&=&
  n^{1/r}-\frac{1}{r}  -\sum_{N=2}^{r}\frac{ \Gamma(N+(N-1)/r)}{(N-1)\Gamma(N+1)\Gamma((N-1)/r)}
   n^{-(N-1)/r}\\
   &&\quad    + \frac{1}{r}n^{-1+1/r}+\frac{B}{n}.
 \end{eqnarray*}
\end{lemma}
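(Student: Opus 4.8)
The plan is to express the saddle point $x=x(n/r)$ through the function $g(z)=z/y(z)$ of Lemma~\ref{lema3}, whose Taylor coefficients $g_N$ are given in Corollary~\ref{cor1}, evaluated at the point $z=n^{-1/r}$, and then to read the expansion off term by term. The key point is the identification $x(n/r)=n^{1/r}g(n^{-1/r})$, after which the lemma becomes bookkeeping with Corollary~\ref{cor1} plus a tail estimate.

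Write $z_0:=n^{-1/r}$. Since $r\le\log n$ we have $n^{1/r}\ge\re$, hence $z_0\le\re^{-1}$; combined with the bound $|g_N|\le(N-1)^{-1}r^{(N-1)/r}$ of Corollary~\ref{cor1} and $r^{1/r}\le3^{1/3}$ this gives $\sum_{N\ge0}|g_N|z_0^N<\infty$ — in fact $\sum_{N\ge1}|g_N|z_0^N<1$ — so $g(z_0)>0$ and each power $g(z)^j$ with $1\le j\le r$ is represented by an absolutely convergent series at $z_0$. Now by~(\ref{frac}) one has $\sum_{j=1}^{r}y(z)^{-j}=z^{-r}$, and since $y(z)=z/g(z)$ this is the formal identity $\sum_{j=1}^{r}z^{r-j}g(z)^j=1$ in $\C[[z]]$. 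Evaluating it at $z=z_0$ (legitimate by the absolute convergence just noted) and multiplying by $n=z_0^{-r}$ yields $\sum_{j=1}^{r}\bigl(n^{1/r}g(z_0)\bigr)^j=n$; since $G:=n^{1/r}g(z_0)>0$ and equation~(\ref{saddle}) with $u=n/r$ has a unique positive solution, $G=x(n/r)$.

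Consequently $x=n^{1/r}\sum_{N\ge0}g_Nz_0^N$. Isolating $N=0,1$ and $2\le N\le r$ and substituting $g_1=-1/r$ together with the closed forms of $g_N$ for $2\le N\le r-1$ and of $g_r$ from Corollary~\ref{cor1} reproduces the displayed main terms, the extra summand $\frac1r n^{-1+1/r}$ arising from the $+1/r$ in the formula for $g_r$ (recall $n^{(1-r)/r}=n^{-1+1/r}$). For the remaining terms, the coefficient bound gives
\[
\Bigl|\sum_{N\ge r+1}g_Nn^{(1-N)/r}\Bigr|\le\sum_{M\ge r}\frac1M\Bigl(\frac rn\Bigr)^{M/r}\le\frac1r\sum_{M\ge r}\Bigl(\frac rn\Bigr)^{M/r}=\frac1n\,\frac1{1-(r/n)^{1/r}}.
\]
Since $r\le\log n$ forces $\frac1r\log(n/r)\ge\frac{\log n-\log\log n}{\log n}\ge\frac12$ (using $\log\log n\le\frac12\log n$), we have $(r/n)^{1/r}=\exp\{-\frac1r\log(n/r)\}\le\re^{-1/2}$, so the right-hand side is at most $3/n$, i.e. $Bn^{-1}$. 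This is the assertion of the lemma.

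The step I expect to demand the most care is the identification $x=n^{1/r}g(n^{-1/r})$: one must pass from the purely formal/analytic object $g$ to the genuine saddle point $x$, and the cleanest bridge is the formal identity $\sum_{j=1}^r z^{r-j}g(z)^j=1$ coming from~(\ref{frac}), validated at $z_0$ by absolute convergence, together with uniqueness of the positive root of~(\ref{saddle}). Everything else is the substitution of the explicit $g_N$ from Corollary~\ref{cor1} and a geometric-series tail bound.
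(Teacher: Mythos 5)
Your proposal is correct and follows the same approach as the paper: both rewrite the saddle point relation to identify $x = n^{1/r}g(n^{-1/r})$, substitute the coefficients from Corollary~\ref{cor1}, and bound the tail $\sum_{N>r}$ geometrically using $|g_N|\leq(N-1)^{-1}r^{(N-1)/r}$ and $z_0=n^{-1/r}\leq\re^{-1}$. The only difference is cosmetic: you justify $x=n^{1/r}g(z_0)$ by passing the formal identity from~(\ref{frac}) through absolute convergence and invoking uniqueness of the positive root of~(\ref{saddle}), whereas the paper identifies $y(n^{-1/r})=x^{-1}$ directly from the defining equation of $y$; your version makes the analytic bridge more explicit, which is a welcome clarification rather than a change of method.
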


\begin{proof}
The equation  defining  $x$ can be rewritten as
\[
    x^{-1}=\bigg(\frac{1-x^{-r}}{1-x^{-1}}\bigg)^{1/r}n^{-1/r}.
\]
This gives the relation $y(n^{-1/r})=x^{-1}$, where $y=y(z)$ has been explored in Lemma \ref{lema3}. Consequently, we may apply the expansions of $g(z)$ given in $(I)$  with respect to powers of  $z=n^{-1/r}$.
The first coefficients have been calculated in Corollary \ref{cor1}. It remains to estimate the remainder. Using also the obtained estimates, we have
\[
   \sum_{N=r+1}^\infty |g_N| |z|^N\leq r^{-1-1/r}\sum_{N=r+1}^\infty  |r^{1/r} z|^N\leq \frac{|z|^{r+1}}{1-\sqrt[3]{3} {\rm e}^{-1}}
\]
if $|z|\leq {\rm e}^{-1}$.  Consequently,  we obtain
  \begin{eqnarray*}
  x&=&n^{1/r} \sum_{N=0}^r g_N n^{-N/r}+ \frac{B}{n}\\
  &=&
  n^{1/r}-\frac{1}{r}  -\sum_{N=2}^{r}\frac{ \Gamma(N+(N-1)/r)}{(N-1)\Gamma(N+1)\Gamma((N-1)/r)}
   n^{-(N-1)/r} \\
   &&\quad
    + \frac{1}{r}n^{-1+1/r}+\frac{B}{n}
  \end{eqnarray*}
  as desired.
\end{proof}

\textit{Proof of Theorem 1.} Let us preserve the  notation  introduced in Lemma \ref{lema3}.
First of all we seek  a simple expression containing the first terms in an expansion of
\[
  K(z):=   \sum_{j=1}^r \frac{1}{j y(z)^j}-n\log \frac{z}{y(z)}= h(z)-n\log g(z).
\]
Let $D(x):=\exp\left\{\sum_{j=1}^r\frac{x^j}{j}\right\}$, we have
\begin{equation}
\log D(x)-n\log x= K(n^{-1/r})-\frac{n\log n}{r}.
\label{Dx}
\end{equation}
Define the  functions $R(z)$ and $K_r(z)$ by
\[
K(z)=\sum_{N=-r+1}^0 h_N z^N-n\sum_{N=1}^{r-1} b_N z^N +R(z)=K_r(z)+R(z)
\]
We claim  that $R(z)=B(|z|+n|z|^{r+1})$ if $|z|\leq \rm e^{-1}$ implying
\begin{equation}
  R(n^{-1/r})=Bn^{-1/r}.
  \label{Rz}
  \end{equation}
 for $r\leq \log n$. Indeed, by $(III)$ of Lemma \ref{lema3} and the estimates in Corollary \ref{cor2}, we have
       \begin{eqnarray*}
          \sum_{N=1}^\infty |h_N||z|^N&=&
           \bigg(\sum_{N=1}^{r-1}+\sum_{N=r}^\infty\bigg)\frac{N+r}{N}|b_{N+r}||z|^N\\
          &=&
          B\sum_{N=1}^{r-1}\frac{N+r}{r}|z|^N +B\sum_{N=r}^\infty\frac{r}{N}\big(r^{1/r}|z|\big)^N
          =B|z|
          \end{eqnarray*}
       if $|z|\leq \rm e^{-1}$. Similarly,
       \[
          \sum_{N=r+1}^\infty |b_N||z|^N=B |z|^{r+1}
          \]
       if $|z|\leq \rm e^{-1}$. The last two estimates yield our claim and (\ref{Rz}).

     For the main term, we obtain from  Lemma \ref{lema3} that
     \begin{eqnarray*}
     K_r(n^{-1/r})&=& h_0+\sum_{N=-r+1}^{-1} h_N n^{-N/r}- \sum_{N=1}^{r-1} b_N n^{(r-N)/r}+h_{-r} n\\
     &=&
     h_0 -\sum_{N=1}^{r-1} \frac{r-N}{N} b_{r-N} n^{N/r}- \sum_{N=1}^{r-1} b_N n^{(r-N)/r}+h_{-r} n\\
     &=&
     h_0 -r\sum_{N=1}^{r-1} \frac{1}{N} b_{r-N} n^{N/r}+h_{-r} n\\
     &=&
- \frac{1}{r}\sum_{j=2}^r\frac{1}{j}+ r \sum_{N=1}^{r-1}\frac{1}{N(r-N)}\, \frac{\Gamma(N+N/r)}{\Gamma(N+1)\Gamma(N/r)}n^{(r-N)/r}+\frac{n}{r}.
     \end{eqnarray*}

     It remains to approximate
     \[
     \Big(\frac{1}{\lambda(x)}\Big)^{1/2}= \frac{1}{\sqrt n} \Lambda(n^{-1/r})^{1/2}=\frac{1}{\sqrt {n r}}\bigg(1-\sum_{N=1}^\infty Nb_N n^{-N/r}\bigg)^{1/2}.
     \]

    By virtue of Corollary \ref{cor2}, $N|b_N|\leq 1$ if $N\leq r$ and  $N|b_N|\leq r^{N/r}$ if $N\geq 1$.
       Thus, if $2\leq r\leq \log n$, then
     \[
        \sum_{N=1}^\infty N|b_N|n^{-N/r}\leq (5/2) n^{-1/r}\leq (5/2){\rm e}^{-1}<1.
        \]
 Consequently,
\[
     \Big(\frac{1}{\lambda(x)}\Big)^{1/2}= \frac{1}{\sqrt {n r}}\big(1+Bn^{-1/r}\big).
\]

     We  now  return to probabilities.
     Applying \eqref{P-ell}, \eqref{fullx},   (\ref{Dx}),   (\ref{Rz}), Stirling's formula and the last estimate,  we obtain
\begin{eqnarray*}
   n!\nu(n,r)&=&
   \frac{n!}{{\sqrt{ 2\pi \lambda(x)}}} n^{-n/r}\exp\big\{K_r(n^{-1/r})\big\}\big(1+Bn^{-1/r}\big)\\
   &=&
   \frac{n^{n(1-1/r)}}{\sqrt r} \exp\big\{-n + K_r(n^{-1/r})\big\}\big(1+Bn^{-1/r}\big)
   \end{eqnarray*}
for  all $2\leq r\leq \log n$.

Theorem 1 is proved.

\medskip

  {\bf Concluding Remark.} The approach can be adopted for more general decomposable structures, in particular, for the so-called logarithmic classes of set constructions (see \cite{ABT}). Using a different method, X. Gourdon \cite{Gourdon} has established some results related to our Theorem 2. They concern the asymptotic distribution of size of the largest component.

\bigskip
 Vilnius University,
  Institute and Faculty of Mathematics and Informatics,
Akademijos str. 4, LT-08663 Vilnius, Lithuania;

\noindent e-mail address:\textit{eugenijus.manstavicius}.at.\textit{mif.vu.lt}
\smallskip

 Vilnius University,
  Faculty of Mathematics and Informatics,
  Naugarduko str. 24, LT-03225 Vilnius, Lithuania;

  \noindent e-mail address:\textit{robertas.petuchovas}.at.\textit{mif.vu.lt}
\medskip

\end{document}